\newcommand{\st}{\,|\,}
\newcommand{\isom}{\cong}
\newcommand{\eps}{\varepsilon}
\newcommand{\C}{\mathbb{C}}
\newcommand{\Z}{\mathbb{Z}}
\newcommand{\uodot}{\boxdot}
\newcommand{\cuodot}{\widehat{\uodot}}
\newcommand{\ucoprod}{\boxtimes}
\newcommand{\cucoprod}{\widehat{\ucoprod}}
\DeclareMathOperator{\Indecomp}{Ind}
\DeclareMathOperator{\Prim}{Add}
\newcommand{\Ab}{\mathbf{Ab}}
\newcommand{\Ho}{\mathbf{Ho}}
\newcommand{\Set}{\mathbf{Set}}
\newcommand{\alg}{\mathbf{Alg}}
\newcommand{\Alg}{\mathbf{Alg}}
\newcommand{\CAlg}{\mathbf{CAlg}}
\newcommand{\CBiring}{\mathbf{CBiring}}
\newcommand{\CPleth}{\mathbf{CPlethory}}
\newcommand{\LoopPleth}{\mathbf{\Omega Plethory}}
\newcommand{\cotimes}{\widehat{\otimes}}
\newcommand{\codot}{\widehat{\odot}}
\newcommand{\cE}{\widehat{E}}
\newcommand{\rE}{\underline{E}} 
\newcommand{\rK}{\underline{K}} 
\DeclareMathOperator{\Spec}{Spec}
\DeclareMathOperator{\Spf}{Spf}
\theoremstyle{plain}
\newtheorem{thm}{Theorem}[section]
\newtheorem{prop}[thm]{Proposition}
\newtheorem{cor}[thm]{Corollary}
\theoremstyle{definition}
\newtheorem{df}[thm]{Definition}
\newtheorem{ex}[thm]{Example}
\title{The plethory of operations in complex topological $K$-theory}
\author{William Mycroft}
\address[W.~Mycroft]{Flat 202,
Baldwin Point\\
6 Sayer Street\\
London\\
SE17 1FG\\ England}
\email{william.mycroft@gmail.com}
\author{Sarah Whitehouse}
\address[S.~Whitehouse]{
School of Mathematics and Statistics\\
University of Sheffield\\ S3 7RH\\ England}
\email{s.whitehouse@sheffield.ac.uk }
\date{\today}
\subjclass[2010]{
55N15, 
55S25} 
\keywords{topological $K$-theory, $K$-theory operations, lambda operations, lambda ring, plethory}
\begin{document}

\maketitle

\begin{abstract}
We provide a concrete introduction to the topologised, graded analogue of an algebraic structure known as a plethory, originally due to Tall and Wraith. Stacey and Whitehouse showed this structure is present on the cohomology operations for a suitable generalised cohomology theory.
We compute an explicit expression for the plethory of operations for complex topological $K$-theory.
This is formulated in terms of a plethory enhanced with structure corresponding to the looping of operations.
In this context we show that the familiar $\lambda$-operations generate all the operations.
\end{abstract}

\setcounter{tocdepth}{1}

\section{Introduction}

Cohomology operations provide a very powerful piece of structure associated with a generalised cohomology theory and over the years they been used to  prove many deep results in algebraic topology. However, despite the ubiquity of cohomology operations, there are some challenges in identifying the appropriate algebraic framework in which to encode the rich structure the operations admit.

Historically, \emph{cooperations}, the homological analogue of operations, have often been the objects of interest and in many cases of interest they encode the same information. These are amenable to study via the medium of Hopf rings and many useful results have been proved that way. One may also consider operations from one theory to another, again with corresponding Hopf rings. For example, under mild hypotheses, Hunton gives a general description of the homology Hopf ring associated to the completion of a theory with respect to an ideal in the coefficient ring~\cite{Hunton:2002}.

Unfortunately, a Hopf ring contains no algebraic structure which naturally corresponds to composition of operations. To address this, Boardman, Johnson and Wilson introduced the notion of an \emph{enriched Hopf ring} which enhances the structure with an external action encoding the missing information \cite{BoardmanJ:1995}. Enriched Hopf rings of cooperations have been computed for many interesting theories,
including complex $K$-theory~\cite[Theorem 17.14]{BoardmanJ:1995}. However,
 this structure can be somewhat cumbersome for performing computations as the enrichment is not easily expressed in terms of generators and relations.

An alternative approach proves fruitful. Roughly speaking, operations act non-linearly on cohomology algebras, and this determines the structure in the same way that (not necessarily commutative) $k$-algebras are precisely the objects which act on $k$-modules. The relevant abstract algebraic structure was first introduced in 1970 by Tall and Wraith~\cite{TallW:1970}, and subsequently
 studied by Bergman and Hausknecht~\cite{BergmanH:1996}, and
  by Borger and Wieland~\cite{BorgerW:2005} who coined the term \emph{plethory}. A priori, cohomology operations do not naturally fit into this framework due to considerations of grading and topologies on the algebraic structures. However, as shown by Stacey and Whitehouse~\cite{StaceyW:2009}, in sufficiently nice cases the operations admit the structure of a graded topologised plethory and this acts on completed cohomology algebras. 
Here the completion is with respect to the skeletal filtration, as discussed in~\cite[Section 3]{Boardman:1995}.
A related approach by Bauer considers
  \emph{formal plethories}, thus avoiding completion issues~\cite{Bauer:2014}. All this should be viewed as an algebraic shadow of corresponding structure in the world of spectra and there is current work towards developing a theory of spectral plethories.

Of course, in the case of complex topological $K$-theory, there is a long tradition of work with cohomology operations,
often formulated in terms of $\lambda$-operations or Adams operations. The ring of symmetric functions provides
a basic example of a plethory, whose algebras are $\lambda$-rings.
Yau has related the enriched Hopf ring approach to that of filtered $\lambda$-rings~\cite{Yau:2003}, restricting attention to the degree zero part of
complex $K$-theory. Working with $p$-adic coefficients, Bousfield's theory of $p$-adic $\theta$-rings captures the structure~\cite{Bousfield:1996}, and work of Rezk, again in a $p$-complete setting, extends this to exhibit the relevance of plethories to power operations at higher chromatic heights~\cite{Rezk:2009}.

The main aim of this paper is to  give a concise full description of the \emph{integral} operations  of complex topological $K$-theory in plethystic terms. We first give a direct proof of the application of plethories to cohomology operations which illuminates exactly where topological issues arise. We then extend our algebraic gadgets to encode the \emph{looping} of operations arising in the topological context. Applying our technical framework to the study of the operations of complex topological $K$-theory yields our main result, in particular showing how the $\lambda$-operations generate all $K$-theory operations.

The main result is  Theorem~\ref{thm:main}. This describes the operations as a  $\Z/2$-graded $\Z$-plethory with looping,
in terms of the plethory of symmetric functions and the plethory of set maps from $\Z$ to $\Z$.
\medskip

This paper is organised as follows. Section~\ref{sec:toppleth} covers plethories in a graded and topologised context.
The (completed) plethory structure of set maps from a ring to itself, such as
$\Set(\Z,\Z)$, is discussed here.
 Section~\ref{sec:ungddops} covers
the plethory of operations in ungraded $K$-theory and looping is discussed in Section~\ref{sec:looping}.
The main result
appears in Section~\ref{sec:mainresult}.
\medskip

Throughout, rings and algebras will be assumed to be (graded) commutative and unital unless stated otherwise.

\section{Topological plethories}
\label{sec:toppleth}

We generalise the theory of plethories \cite{TallW:1970, BorgerW:2005} to a suitably graded and topologised context. This variant is needed to  capture the structure on cohomology operations. We assume familiarity
with~\cite{BorgerW:2005} and our focus is on the differences  in the graded, topologised case.

\medskip

Fix a commutative monoid $Z$, typically $(\Z, +)$ or $(\Z/2, +)$, used for grading.
\medskip

Let $k$ and $k'$ be $Z$-graded rings. Let $\alg_k$ be the category of $k$-algebras and
let $\CAlg_k$ be the category of filtered $k$-algebras which are complete Hausdorff under the filtration topology, meaning that the completion map $A\to \widehat{A}=\displaystyle \lim_{\leftarrow} A/F^aA$ is an isomorphism.
Morphisms are continuous $k$-algebra maps of degree zero. We write $\cotimes$ for the completed tensor product over $k$.
Further details can be found in~\cite[Section 6]{Boardman:1995}.

\begin{df}
The category $\CBiring_{k,k'}$ of \emph{complete Hausdorff $k$-$k'$-birings} is the category of co-$k'$-algebra objects in $\CAlg_k$.
 To be explicit, an object in this category consists of a $Z$-graded collection of complete Hausdorff $k$-algebras $B_\bullet = (B_n)_{n \in Z}$ together with continuous $k$-algebra maps for each $n \in Z$
	\begin{align}
	\Delta^{+} & \colon B_n \to B_n \cotimes B_n \tag{co-addition} \\
	\eps^{+} & \colon B_n \to k \tag{co-zero} \\
	\sigma & \colon B_n \to B_n \tag{co-additive inverse} \\
	\Delta^{\times} & \colon B_n \to \prod_{i+j=n} B_i \cotimes B_j \tag{co-multiplication} \\
	\eps^{\times} & \colon B_0 \to k \tag{co-unit}  \\
	\intertext{and for each $\kappa \in k'$,}
	\gamma(\kappa) & \colon B \to B \tag{co-$k'$-linear structure}
	\end{align}
	satisfying the usual relations for a co-$k'$-algebra object \cite{TallW:1970, Boardman:1995}.
\end{df}
\medskip	
	We use the above notation for the co-algebraic structure maps of a biring throughout this paper.
	
	It is customary~\cite{Boardman:1995, BorgerW:2005} to consider the equivalent formulation of the co-$k'$-linear structure given by $\beta(\kappa) =  \eps^\times \circ \gamma(\kappa)$, where we set $\eps^{\times}(b)=0$ for $b\in B_n$ with $n\neq 0$. Endowing $\alg_k(B_\bullet, k)$ with the $Z$-graded ring structure determined by the other maps above, this yields a map of $Z$-graded rings $\beta \colon k' \to \alg_k(B_\bullet, k)$. (However, this alternative description is unavailable in the case of
	non-(co-unital) co-$k'$-algebra objects, where we have no $\eps^{\times}$.)
	
	A complete Hausdorff $k$-$k'$-biring $B$ is naturally $Z$-$Z$-bigraded, with gradings induced by the gradings on $k$ and $k'$. By an element $x \in B$, we mean $x \in B_n$ for some $n \in Z$. For $x\in B_n$, we define the \emph{$\bullet$-degree} by $\deg_\bullet(x) = n$ and the \emph{$*$-degree} to be $\deg_*(x) = |x| \in Z$, the degree of $x$ in the graded $k$-algebra $B_n$. We can recover the ungraded context as the special case $Z = 0$, the trivial monoid.
	
	We make extensive use of \emph{sumless Sweedler notation} \cite{Sweedler:1969}, writing
\begin{align*}
\Delta^+(x) &= x_{(1)} \otimes x_{(2)},\\
\Delta^\times(x) &= x_{[1]} \otimes x_{[2]}.
\end{align*}

\medskip

In the untopologised case, the algebro-geometric viewpoint of $k$-$k'$-birings as representable functors $\Alg_k \to \Alg_{k'}$ turns out to give very useful intuition. This naturally generalises to the topologised setting via the language of formal schemes. We only need affine schemes, so we use the following definitions~\cite{Strickland:2000}, where we use varieties of algebras in the sense of universal algebra~\cite{Bergman:2015}.

\begin{df}
A \emph{$k$-scheme} is a covariant representable functor $X \colon \Alg_k \to \Set$. Given a variety of algebras $\mathcal{V}$, if $X$ has a specified lift to a functor
$\Alg_k \to \mathcal{V}$,  we say the lift, often also denoted $X$,
 is a \emph{$\mathcal{V}$-algebra $k$-scheme}. If $A$ denotes the representing $k$-algebra, we write $X = \Spec_k(A) = \Alg_k(A, -)$.

A \emph{formal $k$-scheme} is a functor $X \colon \Alg_k \to \Set$ which is a small filtered colimit of $k$-schemes. If $X$ has a specified lift to a functor $\Alg_k \to \mathcal{V}$, we say the lift, often also denoted $X$, is a
\emph{$\mathcal{V}$-algebra formal $k$-scheme}. Given a  filtered  $k$-algebra $A$, we define the formal $k$-scheme
$$ \Spf_k(A) = \varinjlim_a \Alg_k(A/F^aA, -). $$
This construction is functorial, giving a contravariant functor $\Spf_k$ from filtered $k$-algebras to formal $k$-schemes.
\end{df}

It is worth noting that $\Spf_k(A) = \Spf_k(\widehat{A})$, i.e. $\Spf_k(-)$ is blind to completions.
Without going into detail, we remark that both $k$-schemes and formal $k$-schemes preserve completeness and Hausdorff properties; see~\cite{StaceyW:2009}. For example, if $X \colon \Alg_k \to \Alg_{k'}$ is a (formal) $k'$-algebra $k$-scheme then $X$ restricts to a functor $\CAlg_k \to \CAlg_{k'}$.

\begin{df}
A formal $k$-scheme $X$ is \emph{solid} if it is isomorphic to $\Spf_k(A)$ for some filtered $k$-algebra $A$.
\end{df}

\begin{prop}
The functor $\Spf_k$ gives an anti-equivalence between complete Hausdorff $k$-$k'$-birings $B$ and solid formal $k'$-algebra $k$-schemes.
\end{prop}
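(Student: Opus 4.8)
The plan is to establish the anti-equivalence by adapting the classical correspondence between $k$-$k'$-birings and $k'$-algebra $k$-schemes to the complete Hausdorff, formal setting, tracking how the co-algebraic structure maps translate under $\Spf_k$. First I would recall that $\Spf_k$ is a contravariant functor from filtered $k$-algebras to formal $k$-schemes, and that it is blind to completions; restricting to complete Hausdorff $k$-algebras one obtains a functor to solid formal $k$-schemes which, by the definition of \emph{solid}, is essentially surjective. The key input here is that $\Spf_k$ sends the completed tensor product $\cotimes$ over $k$ to the product (coproduct of affine schemes) of formal $k$-schemes, and sends $k$ itself to the terminal formal $k$-scheme; this is where the completeness hypotheses do real work, since $\Spf_k(A \cotimes B) = \varinjlim_a \Alg_k\bigl((A\cotimes B)/F^a,-\bigr)$ must be identified with $\Spf_k(A)\times\Spf_k(B)$, using that the filtration on $A\cotimes B$ is the one making this hold and that $\Alg_k$ in the complete Hausdorff setting has the appropriate (co)products.

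Next I would observe that, under this dictionary, each structure map of a complete Hausdorff $k$-$k'$-biring $B_\bullet$ dualises to a natural operation on the functor $X = \Spf_k(B_\bullet) = \varinjlim \Alg_k(B_\bullet/F^a, -)$: the co-addition $\Delta^+\colon B_n \to B_n \cotimes B_n$ gives a natural addition $X(A)\times X(A)\to X(A)$, the co-zero $\eps^+\colon B_n\to k$ gives the zero element, $\sigma$ gives negation, $\Delta^\times$ gives multiplication (here the product $\prod_{i+j=n}$ over the grading is exactly what is needed to make the graded multiplication land correctly), $\eps^\times$ gives the unit, and $\gamma(\kappa)$ gives the $k'$-scalar action for each $\kappa\in k'$; the biring axioms (the "usual relations for a co-$k'$-algebra object") are precisely dual to the $k'$-algebra axioms for these operations. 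Conversely, given a solid formal $k'$-algebra $k$-scheme $X\cong\Spf_k(A)$, the Yoneda-type lemma for formal schemes (a solid formal $k$-scheme $\Spf_k(A)$ has $\mathrm{Nat}(\Spf_k(A),\Spf_k(A')) \cong \varprojlim_b\varinjlim_a \Alg_k(A'/F^b, A/F^a)$, i.e. continuous $k$-algebra maps $A' \to \widehat A$) lets one read off co-structure maps on $A$ (or its completion), and checking they make $\widehat A$ a complete Hausdorff $k$-$k'$-biring is again dual to the algebra axioms on $X$.

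Then I would assemble these into quasi-inverse functors and verify the unit and counit of the adjunction are isomorphisms: on the biring side this uses $\Spf_k(A) = \Spf_k(\widehat A)$ together with the hypothesis that our birings are already complete Hausdorff (so $A = \widehat A$), and on the scheme side it uses the definition of solid. Functoriality and the fact that morphisms of birings correspond to morphisms of algebra $k$-schemes follows from the same naturality statement. The main obstacle I anticipate is purely topological bookkeeping: making precise that $\Spf_k$ converts $\cotimes$ to $\times$ and, relatedly, that the hom-description of natural transformations between solid formal schemes is correct, since one must be careful about which filtrations are placed on tensor products and on the algebras $B_n$, and about the interplay between the filtered colimit defining $\Spf_k$ and the completed limit defining $\widehat A$. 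Once that lemma is in hand — and it is essentially the content of the cited results in~\cite{Strickland:2000, StaceyW:2009} — the rest is a routine, if lengthy, transcription of the ungraded affine case in~\cite{TallW:1970, BorgerW:2005}, with the grading handled by the indexing in $\Delta^\times$ and $\eps^\times$.
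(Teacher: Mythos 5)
Your proposal is correct and follows essentially the same route as the paper: the paper's proof simply invokes the anti-equivalence between $\CAlg_k$ and solid formal $k$-schemes and then restricts to co-$k'$-algebra objects, which is exactly the dictionary you spell out in detail (including the key point that $\Spf_k$ carries $\cotimes$ to products, which is what makes co-$k'$-algebra objects correspond to $k'$-algebra schemes).
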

\begin{proof} The functor $\Spf_k$ is an anti-equivalence from $\CAlg_k$ to the category of solid formal $k$-schemes and the result follows by restricting to co-$k'$-algebra objects.\end{proof}

\begin{ex}\label{ex:zero}
As in~\cite[Example 1.2(1)]{BorgerW:2005}, $k$ is the initial $k$-$k'$-biring, with all structure maps given by the identity map of $k$. The corresponding functor is the constant
functor at the zero ring.
\end{ex}

\begin{ex}\label{ex:id}
	When $Z = \Z$, the identity functor $\Alg_k \to \Alg_k$ can be expressed as $\Spf_k(\mathcal{I})$ where
	$$ \mathcal{I}_n = \begin{cases}
		k[\iota_n] & n \text{ even} \\
		\Lambda_k[\iota_n] & n \text{ odd},
	\end{cases} $$
	$|\iota_n| = n$  and each $\mathcal{I}_n$ has the discrete topology. The structure maps are given by
	\begin{align*}
	\Delta^+(\iota_n) &= 1 \otimes \iota_n + \iota_n \otimes 1 \\
	\eps^{+}(\iota_n) &= 0 \\
	\sigma(\iota_n) &= -\iota_n \\
	\Delta^\times(\iota_n) &= \sum_{r+s=n} \iota_r \otimes \iota_s \\
	\eps^{\times}(\iota_n) &= \begin{cases}
	1 & n=0 \\
	0 & \text{otherwise}
	\end{cases}  \\
	\intertext{and for each $\kappa \in k$,}
	\beta(\kappa)(\iota_n) &= \begin{cases}
	\kappa & |\kappa|=n \\
	0 & \text{otherwise.}
	\end{cases}
	\end{align*}
\end{ex}

A special case of the following example is relevant to our main result.

\begin{ex}
	The collection of set maps  $\Set(k',k)$ endowed with the topology arising from the pro-finite filtration
\[\left\{\ker\left(\Set(k', k) \to \Set(k'_a, k) \right) \,|\, k'_a \subseteq k', k'_a \text{ finite subring} \right\}\]
 naturally admits the structure of an ungraded complete Hausdorff $k$-$k'$-biring. The $k$-algebra structure is induced by the $k$-algebra structure on $k$ and the co-$k'$-algebra structure is induced by the $k'$-algebra structure on $k'$.

For example, the co-addition is given by the map
	$$ \Set(k', k) \xrightarrow{\Set(+, k)} \Set(k' \times k', k) \isom \Set(k', k) \cotimes \Set(k', k). $$
	
	The formal $k'$-algebra $k$-scheme $\Spf_k(\Set(k', k))$ is naturally isomorphic to the functor of \emph{complete orthogonal idempotents} given on $k$-algebras by
	$$ COI_{k'}(A) = \left\{ (x_i) \in \prod_{i \in k'} A \,\left|\right.\, \sum_i x_i = 1, x_i^2 = x_i, x_ix_j = 0 \text{ for } i \neq j \right\}. $$
	The addition and multiplication are specified by
	\begin{align*}
		\pi_l((x_i) + (y_j)) &= \sum_{i+j=l} x_i y_j \\
		\pi_l((x_i)(y_j)) &= \sum_{ij=l} x_i y_j,
	\end{align*}
	where $l\in k'$ and
	$\pi_l$ denotes the canonical projection $\prod_{i \in k'} A \to A$ to the component indexed by $l$.
	The zero in $COI_{k'}(A)$ is $(\delta_{i0})_{i \in k'}$ and the $1$ is $(\delta_{i1})_{i \in k'}$, where $\delta_{id}$ is the Kronecker delta function.	
	The topology is given by the filtration ideals consisting of sequences containing finitely many non-zero elements. The identification 
\[ \Spf_k(\Set(k', k)) \cong COI_{k'}(-)
\] is given by the natural isomorphism which sends $\chi_d$ to $(\delta_{id})_{i \in k'}$, where  $\chi_d$ is the indicator functor on $\{d\} \subseteq k'$.
	
	When $A$ contains no zero divisors, we have $COI_{k'}(A) \isom k'$. In fact, $\Spf_k(\Set(k', k))$ is the nearest solid formal $k'$-scheme to the constant $k'$-algebra scheme $A \mapsto k'$; see~\cite[Section~4]{Bauer:2014}.
\end{ex}

In various applications, we frequently encounter \emph{non-(co-unital) $k$-$k'$-birings}, corresponding to representable functors from $\Alg_k$ to $\Alg_{k'}^!$, the category of non-unital $k'$-algebras or their topological generalisations. At the level of algebras, it is standard to remedy the lack of a unit via \emph{unitalisation}: given a non-unital $k$-algebra $R$, one forms the $k$-module $k \oplus R$ together with the obvious multiplication. More generally, if $S$ is unital and $R$ is additionally an $S$-module then the coproduct of $k$-modules $S \oplus R$ is naturally a unital $k$-algebra with multiplication given by
$$(s_1 + r_1)(s_2 + r_2) = (s_1 s_2 + r_1 r_2 + s_1 \cdot r_2 + s_2 \cdot r_1)$$
and unit $1_S + 0_R$.
	
	This construction has an analogue in the context of algebra schemes. If $\Spec_k(B)$ is a representable non-unital $k'$-algebra scheme, $\Spec_k(B')$ is a unital $k'$-algebra scheme, and $\Spec_k(B)(A)$ is naturally a $\Spec_k(B')(A)$-module,
	 then the functor $\Spec_k(B' \otimes B)$ is a unital $k'$-algebra scheme given, up to natural isomorphism, on objects by
	$$ A \mapsto \Spec_k(B')(A) \oplus \Spec_k(B)(A). $$
	
	At the level of the representing objects this translates to a $B'$-comodule structure on $B$.
The comultiplication $\Delta^\times$ on $B' \otimes B$ is given by the image of the identity map of
$B' \otimes B\otimes B' \otimes B$ under the composite
\[
\Spec_k(B' \otimes B\otimes B' \otimes B)(A)\cong \Spec_k(B' \otimes B)(A)\times \Spec_k(B' \otimes B)(A)
\xrightarrow{\mu}  \Spec_k(B' \otimes B)(A),
\]
where $\mu$ is the multiplication and $A=B' \otimes B\otimes B' \otimes B$. Using this, one can compute an explicit formula for $\Delta^\times$ and similarly for the counit $\epsilon^\times$.
	Denoting the coaction $B \to B' \otimes B$ by $y \mapsto y_{\{1\}} \otimes y_{\{2\}}$, we
	find the following formulas for the comultiplication and counit of
	 the $k$-$k'$-biring structure on $B' \otimes B$.
	\begin{align*}
	\Delta^\times(x \otimes y) &= x_{[1]} y_{(2) \{1\}} \otimes y_{(1)[1]} y_{(3) \{2\}} \otimes x_{[2]}y_{(3)\{1\}} \otimes y_{(1)[2]} y_{(2)\{2\}} \\
	\eps^\times(x \otimes y) &= \eps^\times(x) \eps^+(y).
	\end{align*}

This construction generalises without difficulty to our topologised framework, replacing schemes with formal schemes and completing tensor products.

\begin{ex}
	\label{ex:biring-adjoin-unit}
	Let $B$ be a non-(co-unital) $k$-$k$-biring and let $A$ be a $k$-algebra. The non-unital $k$-algebra
	$\Spf_k(B)(A)$ naturally admits a $\Spf_k(\Set(k,k))(A)$-module structure which, after identifying $\Spf_k(\Set(k,k))(A)$ with $COI_k(A)$, is given by 		
	$$ ((a_\lambda) \cdot \phi)(b) = \sum_\lambda \phi(\gamma(\lambda)(b)) a_\lambda. $$
Here $\phi\in	\Spf_k(B)(A)$, $\lambda\in k$, $(a_\lambda)\in COI_k(A)$, $b\in B$ and $\gamma$ specifies the co-$k$-linear structure of $B$.

	This translates to a $\Set(k,k)$-comodule structure on $B$ given by
	$$ b \mapsto  \sum_\lambda \chi_\lambda \otimes \gamma (\lambda)(b) $$
	and thus, $\Set(k,k) \cotimes B$ is naturally a $k$-$k$-biring with structure maps $\Delta^\times$, $\eps^\times$ determined by
	\begin{align*}
	\Delta^\times(\chi_d \otimes b) &= \sum_{rs=d} \chi_r \otimes b_{(1)[1]} \gamma(s)(b_{(2)}) \otimes \chi_s \otimes b_{(1)[2]}  \gamma(r)(b_{(3)}) \\
	\eps^\times(f \otimes b) &= \eps^\times(f) \eps^+(b).
	\end{align*}
\end{ex}
\medskip

We generalise the composition product $\odot$~\cite{BorgerW:2005, TallW:1970}, which represents the composition of functors, to the graded topologised setting in two stages, first adding the grading and then the topology. The grading will allow us to model operations between graded objects and the topological setting allows us to consider only the continuous operations.

Just as with the tensor product of algebras, the composition product of a complete Hausdorff biring with a complete Hausdorff algebra is not necessarily complete Hausdorff with respect to the canonical filtration. As with the tensor product, this is remedied by taking the completion.

\begin{df}
	For a complete Hausdorff $k$-$k'$-biring $B$ and complete Hausdorff $k'$-algebra $A$, we define the \emph{complete Hausdorff composition product} $\codot$
 as follows. First, take the quotient of
	$ B \odot A$ by the ideal generated by the relations $b \odot a = 0$ whenever $\deg_\bullet(b) \neq |a|$. The grading on $B \odot A$ is specified by $|b \odot a| = \deg_*(b) = |b|$. Now define
 $B \codot A$ to be the complete Hausdorff $k$-algebra
	$$ B \codot A = \varprojlim_{\alpha, \beta} \frac{B}{F^\beta B} \odot \frac{A}{F^\alpha A}$$
	together with the canonical filtration where $F^\alpha A$ and $F^\beta B$ denote the filtration ideals on $A$ and $B$ respectively.
\end{df}

The defining properties of the composition product generalise without difficulty to the graded topological setting and we have a bifunctor
 $\codot:\CBiring_{k,k'} \times  \CAlg_{k'} \to \CAlg_k$.
 If $B$ and $B'$ are complete Hausdorff birings, the bigrading on the composition product is defined $\bullet$-componentwise in the sense that $(B \codot B')_n = B \codot B'_n$.

\begin{prop}
Let $B$ be a complete Hausdorff $k$-$k'$-biring. The functor
\[B \codot - \colon \CAlg_{k'} \to \CAlg_k\] is left adjoint to $\Spf_k(B) \colon \CAlg_k \to \CAlg_{k'}$. \qed
\end{prop}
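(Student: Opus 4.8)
The plan is to reduce the topologised, graded statement to the classical adjunction between the composition product and the functor represented by a biring, exactly as in Borger--Wieland, and then check that completion does not disturb the adjunction because $\Spf_k$ is blind to completions. Concretely, I would first recall that for a $k'$-algebra $C$ and a complete Hausdorff $k$-algebra $D$, a continuous degree-zero $k$-algebra map $B \codot C \to D$ should correspond naturally to a map $C \to \Spf_k(B)(D) = \CAlg_{k'}(\Spf_k(B)(D)\text{-representing data})$; the key is that $\Spf_k(B)(D)$ is by construction the filtered colimit $\varinjlim_\beta \CAlg_k(B/F^\beta B, D)$ equipped with its co-$k'$-algebra structure, so it is a $k'$-algebra, and one needs the bijection $\CAlg_k(B \codot C, D) \cong \Alg_{k'}(C, \Spf_k(B)(D))$ to be natural and to respect all the relevant structure.

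The key steps, in order, are: (1) Unwind the definition of $B \codot C$ as $\varprojlim_{\alpha,\beta} (B/F^\beta B) \odot (C/F^\alpha C)$ together with the $\bullet$-degree collapsing relation, and note that a continuous map out of this limit into a complete Hausdorff $D$ factors through some finite stage, reducing to the untopologised, ungraded universal property of $\odot$ from \cite{BorgerW:2005, TallW:1970}. (2) Invoke that classical universal property: $\Alg_k(B \odot C, D) \cong \Alg_{k'}(C, \Alg_k(B,D))$ where $\Alg_k(B,D)$ carries the $k'$-algebra structure coming from the co-$k'$-algebra (biring) structure on $B$. (3) Observe that after imposing the $\bullet$-grading relations and passing to the limit, the right-hand side becomes exactly $\Alg_{k'}(C, \Spf_k(B)(D))$, using that $\Spf_k(B)(D) = \varinjlim_\beta \Alg_k(B/F^\beta B, D)$ as a $k'$-algebra and that this is automatically a colimit of a filtered system so the $\varprojlim$ in the source dualises to this $\varinjlim$ in the target. (4) Check naturality in both $C$ and $D$, which is routine since every map in sight is defined by a universal property.

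The main obstacle I expect is the careful bookkeeping in step (3): matching up the limit-of-quotients description of $B \codot C$ with the colimit-of-maps description of $\Spf_k(B)$, while simultaneously tracking the $\bullet$-degree versus $*$-degree distinction so that the grading-collapse relation $b \odot a = 0$ for $\deg_\bullet(b) \neq |a|$ corresponds precisely to the degree-zero condition on morphisms in $\CAlg_{k'}$. One must also confirm that the $k'$-algebra structure that $\odot$-duality puts on $\Alg_k(B,D)$ is the \emph{same} as the co-$k'$-algebra structure that $\Spf_k$ transports from the biring structure of $B$; this is essentially the content of the preceding proposition identifying $\Spf_k$ as an anti-equivalence onto solid formal $k'$-algebra $k$-schemes, so I would cite that. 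A cleaner packaging, which I would actually prefer to write, is: since $\Spf_k(B)\colon \CAlg_k \to \CAlg_{k'}$ is the solid formal $k'$-algebra $k$-scheme corresponding to $B$, its left adjoint exists and is computed by the usual formula for the left adjoint of a scheme-valued functor, namely tensoring the representing object against the composition product; completing to stay within $\CAlg_k$ gives $B \codot -$, and the completion is invisible to $\Spf_k$ by the remark $\Spf_k(A) = \Spf_k(\widehat{A})$, so no adjointness is lost. This sidesteps most of the explicit bookkeeping, at the cost of leaning on the earlier anti-equivalence proposition.
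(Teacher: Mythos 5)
Your proposal is correct and follows exactly the route the paper intends: the proposition is stated there without proof, as an instance of the preceding remark that the defining properties of $\odot$ from Tall--Wraith and Borger--Wieland ``generalise without difficulty'' to the graded, topologised setting, and your reduction to the classical adjunction $\Alg_k(B \odot C, D) \cong \Alg_{k'}(C, \Alg_k(B,D))$ plus limit/colimit bookkeeping is precisely that generalisation. One small imprecision: a continuous map out of $B \codot C$ into a complete Hausdorff $D$ need not factor through a finite stage unless $D$ is discrete --- the correct statement is $\CAlg_k(B \codot C, D) \cong \varprojlim_a \varinjlim_{\alpha,\beta} \Alg_k\bigl((B/F^\beta B) \odot (C/F^\alpha C), D/F^a D\bigr)$, dualised levelwise --- but this does not disturb your argument.
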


\begin{prop}
	For a complete Hausdorff $k$-$k'$-biring $B$ and complete Hausdorff $k'$-algebra $A$, the formal scheme $\Spf_k(B \codot A)$ is given by the composition
	$$ \CAlg_k \xrightarrow{\Spf_k(B)} \CAlg_{k'} \xrightarrow{\Spf_{k'}(A)} \Set.$$
	Hence, $\codot$ lifts to a functor
\[\CBiring_{k,k'} \times \CBiring_{k',k''} \to \CBiring_{k,k''}
\] and $(\CBiring_{k,k}, \codot, \mathcal{I})$ forms a monoidal category. \qed
\end{prop}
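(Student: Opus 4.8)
The plan is to reduce the identification of $\Spf_k(B\codot A)$ to the corresponding untopologised, ungraded statement of Tall--Wraith and Borger--Wieland \cite{TallW:1970, BorgerW:2005}, handling the grading through the degree-matching quotient built into $\codot$ and the topology through the completion; the lift of $\codot$ to birings and the monoidal structure will then follow formally from the anti-equivalence between complete Hausdorff birings and solid formal algebra schemes established above, together with the strict associativity and unitality of composition of functors.

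\emph{The identification.} First I would unwind both sides into their defining (co)limits. By construction $B\codot A$ is the completion of $\varprojlim_{\alpha,\beta}(B/F^\beta B)\odot(A/F^\alpha A)$ after imposing $b\odot a=0$ whenever $\deg_\bullet(b)\neq|a|$; each quotient $B/F^\beta B$ is a discrete $k$-$k'$-biring and the composition product of discrete objects is discrete, so $\Spf_k(B\codot A)$ is the small filtered colimit of the affine $k$-schemes $\Spec_k\bigl((B/F^\beta B)\odot(A/F^\alpha A)\bigr)$. Dually $\Spf_{k'}(A)=\varinjlim_\alpha\Spec_{k'}(A/F^\alpha A)$ and, by the anti-equivalence, $\Spf_k(B)$ lifts to the $k'$-algebra $k$-scheme $\varinjlim_\beta\Spec_k(B/F^\beta B)$. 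It then suffices to prove, compatibly with the transition maps, the discrete affine statement
\[
\Spec_k\bigl((B/F^\beta B)\odot(A/F^\alpha A)\bigr)\;\cong\;\Spec_{k'}(A/F^\alpha A)\circ\Spec_k(B/F^\beta B)
\]
and then to reassemble the colimits. The displayed isomorphism is the graded form of the defining property of the ungraded composition product \cite{TallW:1970, BorgerW:2005}: a $k$-algebra map $(B/F^\beta B)\odot(A/F^\alpha A)\to C$ is the same datum as a $k'$-algebra map $A/F^\alpha A\to\Spec_k(B/F^\beta B)(C)$, and the relations $b\odot a=0$ for $\deg_\bullet(b)\neq|a|$ are exactly what restrict these to the grading-preserving maps that a composite of graded algebra schemes records.

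\emph{The main obstacle.} The delicate step is the reassembly: evaluating both functors at $C\in\CAlg_k$ forces a limit--colimit interchange between the filtered colimits presenting $\Spf_{k'}(A)$ and $\Spf_k(B)$ on the one hand and the completed, cofiltered structure of $B\codot A$ and of the complete Hausdorff targets on the other. Concretely, one must know that the canonical filtration on $B\codot A$ is cofinally given by the kernels onto the discrete stages $(B/F^\beta B)\odot(A/F^\alpha A)$, and that the identifications above survive passage to continuous maps into a complete Hausdorff algebra. This is precisely the bookkeeping for completed tensor and composition products developed in \cite{Boardman:1995} and \cite{StaceyW:2009}, which I would invoke rather than redo; the earlier remark that $\Spf_k(-)$ is blind to completion absorbs any remaining discrepancy between $B\codot A$ and its completion. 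Granting this, one gets the asserted natural isomorphism $\Spf_k(B\codot A)\cong\Spf_{k'}(A)\circ\Spf_k(B)$.

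\emph{Lift to birings and the monoidal structure.} When $A=B'$ is a complete Hausdorff $k'$-$k''$-biring, $\Spf_{k'}(B')$ lifts to a functor $\CAlg_{k'}\to\CAlg_{k''}$ and $\Spf_k(B)$ to $\CAlg_k\to\CAlg_{k'}$, so their composite is a $k''$-algebra $k$-scheme; by the identification it equals $\Spf_k(B\codot B')$, hence is solid, and so by the anti-equivalence it is $\Spf_k$ of a unique complete Hausdorff $k$-$k''$-biring structure on $B\codot B'$. Functoriality of $B\codot B'$ in both variables is inherited from that of $\Spf_k$ and of composition, which gives the asserted lift. Finally, specialising to $k=k'=k''$, the solid $k$-algebra $k$-schemes form a monoidal subcategory of the strict monoidal category of endofunctors of $\CAlg_k$ under composition: it is closed under $\circ$ by the identification, and it contains the monoidal unit, namely the identity functor, which is $\Spf_k(\mathcal{I})$ (Example~\ref{ex:id}). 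Transporting this strict monoidal structure across the anti-equivalence $\Spf_k$ makes $\CBiring_{k,k}$ monoidal with tensor $\codot$ and unit $\mathcal{I}$, the coherence diagrams commuting because all their edges are images of identity natural transformations.
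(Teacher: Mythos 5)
Your argument is correct and is essentially the expansion of the formal argument the paper leaves implicit: the paper states this proposition without proof, taking it to follow from the preceding adjunction $B\codot-\dashv\Spf_k(B)$, the anti-equivalence with solid formal algebra schemes, and the strict monoidality of functor composition, which is exactly the route you take. Your explicit reduction to the discrete stages and your flagging of the limit--colimit interchange (deferred to \cite{Boardman:1995, StaceyW:2009}) supply precisely the bookkeeping the authors suppress, so there is nothing to object to.
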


Proceeding as in the discrete case, we can now define structure which precisely models composition of operations.

\begin{df}
	We define the category of \emph{complete Hausdorff $k$-plethories} $\CPleth_k$ to be the category of monoids in $\CBiring_{k,k}$. Explicitly, a \emph{complete Hausdorff $k$-plethory} is a complete Hausdorff $k$-$k$-biring $P$ together with two additional complete Hausdorff biring morphisms
	\begin{align}
	\circ & \colon P \codot P \to P \tag{composition} \\
	u & \colon \mathcal{I} \to P \tag{identity}
	\end{align}
	satisfying the usual relations for a monoid.
\end{df}

\begin{ex}
	The initial complete Hausdorff $k$-plethory is the complete Hausdorff $k$-$k$-biring $\mathcal{I}$  of Example~\ref{ex:id} together with the canonical structure maps.
\end{ex}

\begin{ex}
	The complete Hausdorff $k$-$k$-biring $\Set(k, k)$ together with composition of maps and the identity map forms an ungraded complete Hausdorff $k$-plethory. We use $\iota$ to denote the identity on $k$ and $1$ to denote the constant map
$k\to k$ sending $\kappa$ to $1$ for all $\kappa\in k$.
\end{ex}

\begin{ex}
As detailed in \cite{BorgerW:2005}, in the discrete setting we have a free functor from the category of $k$-$k$-birings to the category of $k$-plethories, analogous to the tensor algebra construction over a $k$-module. In the topological setting, we define $T_{\codot}(B)$, the free complete Hausdorff $k$-plethory over a complete Hausdorff $k$-$k$-biring $B$ by
$$T_{\codot}(B) = \widehat{\bigotimes_{n \geq 0}} B^{\codot n} $$
together with the obvious identity and composition.
\end{ex}

We wish to encode not only the composition of operations, but the actions of operations on suitable algebras. This leads to a result which proves useful for calculations.

\begin{df}
	For a complete Hausdorff $k$-plethory $P$, we define the category of \emph{complete Hausdorff $P$-algebras} to be the category of algebras over the monad $P \codot - \colon \CAlg_k \to \CAlg_k$. We write $r(x)$ for the image of $r \odot x$ under the action map $P \codot A \to A$.
\end{df}

\begin{ex}
For a space $X$,
	the degree zero complex $K$-theory, $K(X) = [X, \Z \times BU]$, admits the structure of a $\Set(\Z, \Z)$-algebra. The action of $f\in \Set(\Z, \Z)$ sends the class of $x:X\to\Z\times BU$ to the class of the composite
	$$X \xrightarrow{x} \Z \times BU \xrightarrow{f \times 1} \Z \times BU.$$
\end{ex}

\begin{prop}
	\label{prop:coalg-from-action}
	For a complete Hausdorff $k$-plethory $P$, the structure maps are complete Hausdorff $P$-algebra maps and so the co-algebraic structure is determined by the action on complete Hausdorff $P$-algebras. For example, if $r \in P$ then $r(xy) = r_{[1]}(x)r_{[2]}(y)$ for all $x,y$ in any complete Hausdorff $P$-algebra $A$ if and only if $\Delta^\times r = r_{[1]} \otimes r_{[2]}$.
\end{prop}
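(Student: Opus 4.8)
The plan is to proceed in three steps: pin down the $P$-algebra structures in play, verify that the structure maps are equivariant, and then read off the consequence. First I would recall the relevant facts about a complete Hausdorff $k$-plethory $P$. Each $P_n$ is a complete Hausdorff $P$-algebra via the restriction $\circ\colon P\codot P_n\to P_n$ of the composition map, the monoid axioms for the plethory being precisely the monad-algebra axioms; in fact $P_n$ is the free complete Hausdorff $P$-algebra on a single generator $\iota_n$ of $*$-degree $n$, that is $P_n\cong P\codot\mathcal{I}_n$, since $\Spf_k(P\codot\mathcal{I}_n)$ is the composite $\CAlg_k\to\CAlg_k\to\Set$ of $\Spf_k(P)$ followed by $\Spf_k(\mathcal{I}_n)$, which sends $D$ to the degree-$n$ part of $\Spf_k(P)(D)$ and so coincides with $\Spf_k(P_n)$. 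Moreover, complete Hausdorff $P$-algebras are closed under completed products and completed tensor products, with $r\in P$ acting on $A\cotimes B$ by $r(a\otimes b)=r_{[1]}(a)\otimes r_{[2]}(b)$ (well-definedness being a consequence of the biring axioms for $P$), and $k$ is a complete Hausdorff $P$-algebra with the action of $r$ determined by $\eps^\times$ (equivalently $\beta$). Hence the sources and targets of all of $\Delta^+$, $\eps^+$, $\sigma$, $\Delta^\times$, $\eps^\times$, $\gamma(\kappa)$ are complete Hausdorff $P$-algebras.

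Each structure map is by construction a continuous $k$-algebra map, so the content to be checked is $P$-equivariance, and I would argue this is just a restatement of the plethory axioms. For $\Delta^\times$, equivariance says $\Delta^\times(s(r))=s(\Delta^\times r)$ for all $s\in P$ and $r\in P_n$, where $s$ acts on $P_n$ by $s(r)=s\circ r$ and on $\prod_{i+j=n}P_i\cotimes P_j$ through the tensor-product action above; writing out both sides --- using the co-addition of $s$ to distribute over the Sweedler expansion of $\Delta^\times r$ --- produces exactly the identity stating that $\circ\colon P\codot P\to P$ commutes with co-multiplication. The remaining maps $\Delta^+$, $\eps^+$, $\sigma$, $\eps^\times$, $\gamma(\kappa)$ are treated identically and match, one by one, the remaining conditions for $\circ$ to be a morphism of complete Hausdorff birings, all of which hold by definition of a plethory. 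This verification, together with the bookkeeping of bigradings and completions, is the only substantive work; I expect it to be the main obstacle, though it is routine given the framework of this section.

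Finally, for the closing clause: a $P$-algebra map out of the free $P$-algebra $P_n$ is determined by the image of $\iota_n$, so by the previous step the biring structure maps, and hence the whole co-algebraic structure, are recovered from the action on complete Hausdorff $P$-algebras. The example is the concrete instance for $\Delta^\times$. For one direction: in $P\codot A$ the defining co-multiplicativity relation of the composition product gives $r\odot(xy)=(r_{[1]}\odot x)(r_{[2]}\odot y)$ whenever $\Delta^\times r=r_{[1]}\otimes r_{[2]}$, and applying the continuous $k$-algebra action map $P\codot A\to A$ yields $r(xy)=r_{[1]}(x)r_{[2]}(y)$ in any complete Hausdorff $P$-algebra $A$. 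For the converse, suppose $r(xy)=\rho_{[1]}(x)\rho_{[2]}(y)$ for all $x,y$ in every complete Hausdorff $P$-algebra, with $\rho\in\prod_{i+j=n}P_i\cotimes P_j$; for each decomposition $i+j=n$ I would evaluate in the free $P$-algebra $P\codot(\mathcal{I}_i\cotimes\mathcal{I}_j)\cong P_i\cotimes P_j$ on generators of $*$-degrees $i$ and $j$, taking $x=\iota_i\otimes 1$ and $y=1\otimes\iota_j$. Since $r_{[1]}(\iota_i\otimes 1)=(r_{[1]}\circ\iota_i)\otimes 1=r_{[1]}\otimes 1$ when $\deg_\bullet r_{[1]}=i$ (by the plethory unit axiom), and likewise for $r_{[2]}$, the direction just proved shows $r(xy)$ equals the image of $\Delta^\times r$ under the projection onto $P_i\cotimes P_j$, while by hypothesis it equals the corresponding projection of $\rho$; varying over $i+j=n$ gives $\Delta^\times r=\rho$.
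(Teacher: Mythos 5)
Your argument is correct and is essentially the approach the paper itself relies on: its ``proof'' is just a citation to Section~4 of Tall--Wraith for the discrete case, and what you have written is a faithful, more detailed rendering of that argument in the graded, completed setting, including the two key points that $P_n$ is the free $P$-algebra on $\iota_n$ (so the co-structure is read off from the action on free algebras) and that equivariance of each structure map is a restatement of $\circ$ being a morphism of birings. One small imprecision: when you compute $s$ acting on $\Delta^\times r \in P_i\cotimes P_j$, the expansion uses the co-multiplication of $s$ on each elementary tensor $(u\otimes 1)(1\otimes v)$ as well as the co-addition of $s$ across the summands of $\Delta^\times r$, but this does not affect the conclusion.
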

\begin{proof}
See~\cite[Section 4]{TallW:1970} for the discrete case, which generalises without difficulty.
\end{proof}

\medskip

We can now give a direct proof of a key result of Stacey and Whitehouse \cite[Corollary 5.4]{StaceyW:2009}. The original proof is an application of a very abstract, but more general result.  For a space $X$, we write $\cE^*(X)$ for the completed $E$-cohomology of $X$ with respect to the skeletal filtration.

\begin{thm}
	\label{thm:cohom-is-pleth}
	Let $E^*(-)$ be a multiplicative cohomology theory. If $E_*(\rE_n)$ is a free $E^*$-module for each $n \in \mathbb{Z}$ then $E^*(\rE_\bullet)$ is a complete Hausdorff $E^*$-plethory. Moreover, for any space $X$ the completed cohomology $\cE^*(X)$ is naturally a $E^*(\rE_\bullet)$-algebra.
\end{thm}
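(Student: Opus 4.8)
The plan is to realise the biring and plethory structure explicitly from the $\Omega$-spectrum $\rE_\bullet$ that represents $E^*$, transporting the ring-spectrum structure maps and the composition of operations through the homotopy functor $E^*(-)$; this is the ``direct'' substitute for the abstract argument of \cite{StaceyW:2009}. Put $P_n = E^*(\rE_n)$ with the skeletal filtration $F^a P_n = \ker\bigl(E^*(\rE_n)\to E^*(\rE_n^{(a-1)})\bigr)$, let $\deg_\bullet$ on $P_n$ be $n$ and let $\deg_*$ be the internal cohomological degree, so that $P = E^*(\rE_\bullet)$ is $\Z$-$\Z$-bigraded. The one genuinely non-formal ingredient --- and the main obstacle --- is the following consequence of the hypothesis that each $E_*(\rE_n)$ is a free $E^*$-module: by the completed K\"unneth theorem \cite[Section~6]{Boardman:1995}, each $P_n$ is complete Hausdorff in the skeletal topology, and for all $m,n$ the external product induces an isomorphism $E^*(\rE_m\times\rE_n)\xrightarrow{\,\cong\,}P_m\cotimes P_n$. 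Once this is in hand, the rest is bookkeeping, the only delicate point being continuity.

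Granting this, one defines the co-operations by applying $E^*(-)$ to the $\Omega$-ring-spectrum structure on $\rE_\bullet$ and invoking these K\"unneth isomorphisms: $\Delta^+ = a_n^*$ for the H-space addition $a_n\colon\rE_n\times\rE_n\to\rE_n$; $\eps^+ = 0^*$ for the basepoint; $\sigma = \nu_n^*$ for the H-space inverse; $\Delta^\times = (\mu_{ij}^*)_{i+j=n}$ for the pairings $\mu_{ij}\colon\rE_i\times\rE_j\to\rE_{i+j}$; $\eps^\times = 1^*$ for the unit $\mathrm{pt}\to\rE_0$; and $\gamma(\kappa)$ is pullback along the maps $\rE_n\to\rE_{n+|\kappa|}$ representing multiplication by $\kappa\in E^*$. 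Each is a continuous $E^*$-algebra map of the correct degree, continuity holding because cellular approximation makes the representing maps carry skeleta into skeleta up to a degree shift. That these maps satisfy the identities of a complete Hausdorff $E^*$-$E^*$-biring is then formal: each identity is the image under $E^*(-)$ of a homotopy-commutative diagram of spaces encoding one of the ring-spectrum axioms --- associativity, commutativity and distributivity, the unit and inverse laws, and compatibility with the $E^*$-action --- and $E^*(-)$ converts the products occurring in these diagrams into completed tensor products by the previous paragraph. This verification is routine but lengthy.

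For the plethory structure, composition of operations supplies $\circ\colon P\codot P\to P$: an element $\psi\in P_n$ of $*$-degree $m$ is represented by a map $f_\psi\colon\rE_n\to\rE_m$, and one sends the generator $\theta\odot\psi$ --- which survives in the composition product only when $\deg_\bullet\theta = |\psi| = m$ --- to $f_\psi^*\theta\in P_n$. Cellular approximation again yields continuity in each variable, so this extends to $P\codot P$, and naturality of pullback shows it is a biring morphism; the unit $u\colon\mathcal{I}\to P$ takes the generator $\iota_n$ of Example~\ref{ex:id} to the canonical class of $\mathrm{id}_{\rE_n}\in E^n(\rE_n)$. The monoid axioms for $(\circ, u)$ reduce to associativity and unitality of composition of maps of spaces, so $P$ is an object of $\CPleth_{E^*}$.

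For the final clause, let $X$ be a space. The Milnor sequence identifies $\cE^*(X)$ with $\varprojlim_\alpha E^*(X^{(\alpha)})$, which is complete Hausdorff by construction, and a class $x\in\cE^n(X)$ is thus a compatible system of maps $x_\alpha\colon X^{(\alpha)}\to\rE_n$. For $\theta\in P_n$ put $\theta(x) = (x_\alpha^*\theta)_\alpha\in\varprojlim_\alpha E^*(X^{(\alpha)}) = \cE^*(X)$; this is visibly uniformly continuous in $x$ and in $\theta$, so extending $E^*$-linearly and multiplicatively from the generators $\theta\odot x$ of $P\codot\cE^*(X)$ gives an action map $P\codot\cE^*(X)\to\cE^*(X)$ in $\CAlg_{E^*}$, compatible with the co-operations (since, e.g., $\Delta^+ = a_n^*$ forces $(x+y)^*\theta = x^*\theta_{(1)}\cdot y^*\theta_{(2)}$; cf.\ Proposition~\ref{prop:coalg-from-action}). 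The unit and associativity axioms for an algebra over the monad $P\codot -$ hold because the identity operation acts as the identity and $(\theta\circ\psi)(x) = \theta(\psi(x))$, both visible at the level of maps $X^{(\alpha)}\to\rE_n\to\rE_m\to\rE_l$, and naturality in $X$ is immediate. Hence $\cE^*(X)$ is naturally an $E^*(\rE_\bullet)$-algebra.
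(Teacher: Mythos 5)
Your proposal is correct and follows essentially the same route as the paper's (much terser) proof: the freeness hypothesis gives completed K\"unneth isomorphisms, the ring-space structure on $\rE_\bullet$ transports to the co-$E^*$-algebra structure, composition is $r\circ s = s^*(r)$ via representing maps, and the unit sends $\iota_n$ to the universal class. The only caveat is that the filtration should be Boardman's profinite one (kernels of restriction to \emph{finite} subcomplexes) rather than the skeletal one, since that is the topology for which the cited completeness and K\"unneth results of \cite{Boardman:1995} are stated; this does not affect the structure of your argument.
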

\begin{proof} Since each $E_*(\rE_n)$ is a free $E^*$-module, we have suitable K\"unneth isomorphisms and thus the $E^*$-algebra object structure on $(E_n)_{n \in \Z}$ induces a co-$E^*$-algebra structure on the collection of complete Hausdorff $E^*$-algebras $E^*(\rE_n)$. Thus, $E^*(\rE_\bullet)$ is a complete Hausdorff $E^*$-$E^*$-biring. We define a composition $\circ \colon E^*(\rE_\bullet) \codot E^*(\rE_\bullet) \to E^*(\rE_\bullet)$ by $r \circ s = s^*(r)$ and the unit $u \colon \mathcal{I} \to E^*(\rE_\bullet)$ by $u(\iota_n) = \iota_n \in E^*(\rE_n)$, the universal class. These maps make $E^*(\rE_\bullet)$ a complete Hausdorff $E^*$-plethory by construction.
\end{proof}

As this theory is set up to work with completed cohomology algebras, with respect to the skeletal filtration, we lose some information. In general, the completion of a cohomology algebra contains strictly less information than the uncompleted algebra. In forming the completion, we take the quotient by the \emph{phantom classes}: those which are zero on any finite subcomplex.
In~\cite{Bauer:2014}, Bauer shows that we can avoid this issue by working with the entire pro-system of cohomology algebras. However in many cases of interest, there are results that preclude the existence of phantom classes and thus $E^*(X) = \cE^*(X)$.

Our main results relate to integral complex $K$-theory, so we do not make use of other completions, such as $p$-adic completion, or $I$-adic completion with respect to
an ideal.
\medskip

We introduce some theory of \emph{non-(co-unital)} birings which will prove useful. For brevity, we focus on the discrete, ungraded case but remark that these constructions generalise without difficulty to the topologised, graded setting.

\begin{df}
We define the \emph{non-(co-unital) composition product} $ B \uodot A$ of a non-(co-unital) $k$-$k'$-biring and a non-unital $k'$-algebra $A$ to be the free unital $k$-algebra on the symbols $b \uodot a$, for $b\in B$, $a\in A$, quotiented by the relations enforcing that $b \mapsto b \uodot a$ is a $k$-algebra map together with the relations
\begin{align*}
b \uodot (a_1 + a_2) &= (b_{(1)} \uodot a_1) (b_{(2)} \uodot a_2) \\
b \uodot (a_1 a_2) &= (b_{[1]} \uodot a_1) (b_{[2]} \uodot a_2)  \\
b \uodot (\kappa a) &= \gamma(\kappa)(b) \uodot a \\
b \uodot 0 &= \eps^+(b)
\end{align*}
for all $a,a_1, a_2 \in A$, $b \in B$ and $\kappa \in k'$.
\end{df}

\begin{prop}
	If $B$ is a non-(co-unital) $k$-$k'$-biring, the functor $B \uodot - \colon \Alg_{k'}^! \to \Alg_k$ is left adjoint to $\Spec_k(B) \colon \Alg_k \to \Alg_{k'}^!$.
\end{prop}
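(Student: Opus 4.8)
The plan is to exhibit, for every non-unital $k'$-algebra $A$ and every $k$-algebra $C$, a bijection
\[
\Alg_k(B \uodot A, C) \;\cong\; \Alg_{k'}^!\bigl(A, \Spec_k(B)(C)\bigr)
\]
natural in $A$ and $C$. Here $\Spec_k(B)(C) = \Alg_k(B, C)$ carries the non-unital $k'$-algebra structure induced by the co-$k'$-algebra structure of $B$: the sum of $\phi, \psi \colon B \to C$ is $b \mapsto \phi(b_{(1)})\psi(b_{(2)})$ via $\Delta^+$, the product is $b \mapsto \phi(b_{[1]})\psi(b_{[2]})$ via $\Delta^\times$, the action of $\kappa \in k'$ is precomposition with $\gamma(\kappa)$, and the zero element is the composite $B \xrightarrow{\eps^+} k \to C$. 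There is deliberately no unit, precisely because $B$ has no co-unit $\eps^\times$; this is exactly why the target category must be $\Alg_{k'}^!$ rather than $\Alg_{k'}$.

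First I would define the forward map of the bijection. Given a $k$-algebra map $f \colon B \uodot A \to C$, set $\tilde f(a) \colon B \to C$ to be $b \mapsto f(b \uodot a)$. The relation in the definition of $B \uodot A$ stating that $b \mapsto b \uodot a$ is a $k$-algebra map, combined with $f$ being a $k$-algebra map, shows that each $\tilde f(a)$ lies in $\Alg_k(B, C)$. The remaining four defining relations of $B \uodot A$ then translate termwise into the statements that $\tilde f$ is additive (the $a_1 + a_2$ relation matched against addition via $\Delta^+$), multiplicative (the $a_1 a_2$ relation against $\Delta^\times$), $k'$-linear (the $\kappa a$ relation against $\gamma$), and zero-preserving (the relation $b \uodot 0 = \eps^+(b)$ against the chosen zero of $\Alg_k(B,C)$). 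Hence $\tilde f \in \Alg_{k'}^!\bigl(A, \Spec_k(B)(C)\bigr)$.

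Conversely, given a non-unital $k'$-algebra map $g \colon A \to \Alg_k(B, C)$, I would define $\hat g \colon B \uodot A \to C$ on generators by $b \uodot a \mapsto g(a)(b)$. Since $B \uodot A$ is by construction the free unital $k$-algebra on the symbols $b \uodot a$ modulo exactly those relations, it suffices to check that $b \mapsto g(a)(b)$ respects each of them; but that is precisely the assertion that each $g(a)$ is a $k$-algebra map, together with $g$ being additive, multiplicative, $k'$-linear and zero-preserving --- i.e. the same correspondence read in the opposite direction. A glance at the formulas shows $f \mapsto \tilde f$ and $g \mapsto \hat g$ are mutually inverse, and naturality in $A$ and $C$ is immediate, which gives the adjunction.

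The substantive content is simply the bookkeeping that pairs up each defining relation of $\uodot$ with one structure map of the biring; the one point I would take care over is the role of co-unitality, namely that omitting $\eps^\times$ from $B$ is exactly compensated by omitting the unit from $A$, so that $b \uodot 0 = \eps^+(b)$ encodes preservation of the zero (not a unit) of $\Alg_k(B,C)$ and no unit-compatibility condition is required on either side. This is just the evident ungraded, discrete shadow of the earlier adjunction $B \codot - \dashv \Spf_k(B)$, and I would note that the argument extends verbatim to the graded, topologised setting.
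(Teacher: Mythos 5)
Your proof is correct and is exactly the argument the paper has in mind: the paper's own proof is the one-line remark that this is the same hom-set bijection as in the co-unital case, and you have simply written that bijection out, matching each defining relation of $\uodot$ with the corresponding structure map of $B$. Your observation that the missing co-unit $\eps^\times$ on $B$ is compensated by the missing unit on $A$, with $b \uodot 0 = \eps^+(b)$ encoding preservation of zero rather than of a unit, is precisely the point the terse official proof leaves implicit.
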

\begin{proof} This is the same argument as in the co-unital setting. \end{proof}

For $k$-$k'$-birings $B, B'$ and $k'$-algebras $A, A'$ we have natural isomorphisms
\begin{align*}
B \odot (A \otimes A') &\isom (B \odot A) \otimes (B \odot A'),\\
(B \otimes B') \odot A &\isom (B \odot A) \otimes (B' \odot A),\\
k \odot B &\isom k \isom B \odot k'.
\end{align*}
 These have analogues in the non-(co-unital) setting.

Let $R, S$ be non-unital $k$-algebras. Recall the coproduct $R \ucoprod S$ is given by the $k$-module $ R \oplus S \oplus (R \otimes S)$ together with
multiplication specifed by
 the product of $r_1 + s_1 + r'_1\otimes s'_1$ and $r_2+s_2+r'_2\otimes s'_2$ being given by
$$r_1 r_2 + s_1 s_2 + r_1 \otimes s_2 + r_2\otimes s_1 + r_1 r'_2 \otimes s'_2+ r'_1 r_2 \otimes s'_1 + r'_2\otimes s_1 s'_2 + r'_1\otimes s'_1 s_2 + r'_1 r'_2 \otimes s'_1 s'_2.$$

\begin{prop}
	Let $B$ be a non-(co-unital) $k$-$k'$-biring and $A, A'$ non-unital $k'$-algebras.	We have isomorphisms $B \uodot (A \ucoprod A') \isom (B \uodot A) \otimes (B \uodot A') $ and $k \uodot A \isom k$.
\end{prop}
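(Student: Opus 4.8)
The plan is to identify both sides with a common universal object by comparing their universal properties, exactly paralleling the co-unital statements already recorded above. The isomorphism $k \uodot A \isom k$ is immediate from the defining relations: in $k \uodot A$, viewing $k$ as the initial biring (Example~\ref{ex:zero}) whose structure maps are all identities, we have $\gamma(\kappa)(b) = b$ and $\eps^+(b) = b$ for $b \in k$, so the relation $b \uodot 0 = \eps^+(b)$ together with $b \uodot (a_1+a_2) = (b_{(1)} \uodot a_1)(b_{(2)} \uodot a_2)$ forces $\kappa \uodot a = \kappa$ for every $a$ (taking $a_1 = a$, $a_2 = 0$ and using $\Delta^+ b = b \otimes b$ on the initial biring); hence the algebra map $k \to k \uodot A$ is surjective, and it is injective since $k \uodot A$ receives a map to $k$ exhibiting a retraction, namely the one adjoint to the identity $\Alg_k \to \Alg_k$-scheme structure on $\Spec_k(k)(A) = *$. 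The cleanest phrasing is to invoke the adjunction from the previous proposition: $\Alg_k(k \uodot A, C) \isom \Alg_{k'}^!(A, \Spec_k(k)(C))$, and $\Spec_k(k)(C) = \Alg_k(k,C)$ is a singleton (the zero non-unital $k'$-algebra), so $k \uodot A$ represents the same functor as $k$.

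For the main isomorphism $B \uodot (A \ucoprod A') \isom (B \uodot A) \otimes (B \uodot A')$ I would argue purely representably. By the adjunction proposition for non-(co-unital) birings, for any unital $k$-algebra $C$ we have
\[
\Alg_k\bigl(B \uodot (A \ucoprod A'),\, C\bigr) \isom \Alg_{k'}^!\bigl(A \ucoprod A',\, \Spec_k(B)(C)\bigr).
\]
Since $\ucoprod$ is the coproduct in $\Alg_{k'}^!$ (this is the content of the displayed multiplication formula, which is precisely the unitalisation-style coproduct), this splits as
\[
\Alg_{k'}^!\bigl(A,\, \Spec_k(B)(C)\bigr) \times \Alg_{k'}^!\bigl(A',\, \Spec_k(B)(C)\bigr) \isom \Alg_k(B \uodot A, C) \times \Alg_k(B \uodot A', C),
\]
again by the adjunction, and this last product is $\Alg_k\bigl((B \uodot A) \otimes (B \uodot A'),\, C\bigr)$ because $\otimes$ over $k$ is the coproduct in $\Alg_k$. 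Naturality in $C$ is clear at each step, so Yoneda gives the desired natural isomorphism. One should record that the isomorphism sends $b \uodot (a \ucoprod a')$ — or rather the generators $b \uodot \iota_A(a)$, $b \uodot \iota_{A'}(a')$ where $\iota_A, \iota_{A'}$ are the coproduct inclusions — to $(b \uodot a) \otimes 1$ and $1 \otimes (b \uodot a')$ respectively, and check on the cross-term $b \uodot (r' \otimes s')$ that the defining relation $b \uodot (a_1 a_2) = (b_{[1]} \uodot a_1)(b_{[2]} \uodot a_2)$ is respected; this is the same kind of bookkeeping that underlies the explicit $\Delta^\times$ formula for $B' \otimes B$ computed earlier, so I would not spell it out.

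The main obstacle, such as it is, is purely one of verifying that $\ucoprod$ as \emph{defined} (via that explicit nine-term product on $R \oplus S \oplus R \otimes S$) really is the categorical coproduct in $\Alg_{k'}^!$ — i.e. that a pair of non-unital algebra maps $R \to T$, $S \to T$ glues uniquely to a map $R \ucoprod S \to T$, with the $R \otimes S$ summand forced to go to the product of images. This is a short but genuine check: one writes down the candidate map, confirms it is multiplicative against the nine-term formula, and confirms uniqueness because $R$, $S$ and their products generate $R \ucoprod S$. Everything else — the splitting of $\Hom$ out of a coproduct, the adjunction, Yoneda — is formal, and the argument is robustly the same as in the co-unital case treated in \cite{BorgerW:2005}, so I would state the proof in a couple of lines: \emph{Both sides corepresent the functor $C \mapsto \Alg_{k'}^!(A,\Spec_k(B)(C)) \times \Alg_{k'}^!(A',\Spec_k(B)(C))$ by the adjunction of the preceding proposition together with the coproduct descriptions of $\ucoprod$ in $\Alg_{k'}^!$ and of $\otimes$ in $\Alg_k$; for the second isomorphism, $\Spec_k(k)(C)$ is terminal in $\Alg_{k'}^!$.}
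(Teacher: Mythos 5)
Your proof is correct and is essentially identical to the paper's: both sides are shown to corepresent the same functor on $\Alg_k$ via the adjunction $\Alg_k(B \uodot -, X) \isom \Alg^!_{k'}(-, \Spec_k(B)(X))$, the coproduct property of $\ucoprod$ in $\Alg^!_{k'}$, and the coproduct property of $\otimes$ in $\Alg_k$, while $k \uodot A \isom k$ follows because $k$ is the initial biring representing the constant functor at the zero ring. (Your hands-on aside writing $\Delta^+ b = b \otimes b$ on the initial biring is not quite right, but you correctly discard it in favour of the representable argument, which is the one the paper uses.)
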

\begin{proof} For any $k$-algebra $X$ we have isomorphisms
\begin{align*}
\Alg_k(B \uodot (A \ucoprod A'), X) &\isom \Alg^!_{k'}(A \ucoprod A', \Spec_k(B)(X)) \\
&\isom \Alg^!_{k'}(A, \Spec_k(B)(X)) \times \Alg^!_{k'}(A', \Spec_k(B)(X)) \\
&\isom \Alg_k(B \uodot A, X) \times \Alg_k(B \uodot A', X) \\
&\isom \Alg_k((B \uodot A) \otimes (B \uodot A'), X).
\end{align*}
As in Example~\ref{ex:zero}, $k$  is the initial $k$-$k'$-biring corresponding to the constant functor at the zero ring
and the isomorphism $k \uodot A \isom k$ is trivial.
\end{proof}

\begin{prop}
Suppose $B$ is a non-(co-unital) $k$-$k'$-biring and a co-$B'$-module where $B'$ is a $k$-$k'$-biring.
For an augmented $k'$-algebra $A$, we have an isomorphism
\[(B' \otimes B) \odot A \cong (B' \odot A) \otimes (B \uodot IA),
\] where $IA$ denotes the augmentation ideal of $A$.
\end{prop}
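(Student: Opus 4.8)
The plan is to prove the isomorphism by exhibiting a natural bijection on $\Alg_k$-points and then invoking Yoneda, exactly in the style of the preceding proposition. Fix a $k$-algebra $X$. On the left-hand side, $\Alg_k((B' \otimes B) \odot A, X) \cong \Alg_{k'}(A, \Spec_k(B' \otimes B)(X))$ by the adjunction for $\odot$; and by the unitalisation discussion in the excerpt, $\Spec_k(B' \otimes B)(X) \cong \Spec_k(B')(X) \oplus \Spec_k(B)(X)$ as a unital $k'$-algebra, where $\Spec_k(B)(X)$ is the (non-unital) augmentation ideal and $\Spec_k(B')(X)$ is the unital quotient. On the right-hand side, $\Alg_k((B' \odot A) \otimes (B \uodot IA), X) \cong \Alg_k(B' \odot A, X) \times \Alg_k(B \uodot IA, X) \cong \Alg_{k'}(A, \Spec_k(B')(X)) \times \Alg^!_{k'}(IA, \Spec_k(B)(X))$, using the ordinary $\odot$-adjunction and the non-(co-unital) $\uodot$-adjunction.

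So the whole statement reduces to the claim that, for an augmented $k'$-algebra $A$ and a unital $k'$-algebra $S$ which is a split square-zero-style extension $S \cong S' \oplus N$ with $S'$ unital and $N$ its augmentation ideal (here $S = \Spec_k(B'\otimes B)(X)$, $S' = \Spec_k(B')(X)$, $N = \Spec_k(B)(X)$), there is a natural bijection
\[
\Alg_{k'}(A, S' \oplus N) \;\cong\; \Alg_{k'}(A, S') \times \Alg^!_{k'}(IA, N).
\]
First I would send a unital $k'$-algebra map $\phi\colon A \to S'\oplus N$ to the pair consisting of its composite with the projection $S'\oplus N \to S'$ (a unital $k'$-algebra map $A\to S'$) and its restriction to $IA$ followed by projection onto $N$ (which lands in $N$ since $IA$ maps into the augmentation ideal of $S'\oplus N$, which is $N$, and which is multiplicative, hence a non-unital map $IA\to N$). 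Conversely, given $(\psi\colon A\to S',\ \theta\colon IA\to N)$, I would reconstruct $\phi$ by $\phi(a) = \psi(a) + \theta(a - \eta_A(\varepsilon_A(a)))$, where $\varepsilon_A$ is the augmentation and $\eta_A$ the unit; equivalently, decompose $a = \varepsilon_A(a)\cdot 1_A + \tilde a$ with $\tilde a \in IA$ and set $\phi(a) = \varepsilon_A(a)\cdot 1_{S} + \psi(\tilde a) + \theta(\tilde a)$. One checks this is a well-defined unital $k'$-algebra map using the multiplication rule $(s_1+n_1)(s_2+n_2) = s_1 s_2 + n_1 n_2 + s_1\cdot n_2 + s_2\cdot n_1$ on $S'\oplus N$ together with the fact that $\psi$ respects multiplication on all of $A$ and $\theta$ respects it on $IA$; and that the two assignments are mutually inverse. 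Finally, all these bijections are natural in $X$, so Yoneda gives the desired isomorphism of $k$-algebras.

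The main obstacle I anticipate is purely bookkeeping: verifying that the reconstructed $\phi$ is genuinely multiplicative requires carefully splitting products $a_1 a_2$ into their augmentation and ideal parts and matching the cross terms $\psi(\tilde a_1)\cdot\theta(\tilde a_2)$-type expressions against the module action in $S'\oplus N$ — one must use that the $S'$-module structure on $N$ is compatible with how $\Spec_k(B)(X)$ sits inside $\Spec_k(B'\otimes B)(X)$, i.e.\ exactly the co-$B'$-module (comodule) hypothesis on $B$. It is worth remarking that the hypothesis that $A$ is augmented is what makes the splitting $A \cong k' \oplus IA$ available and hence what makes the construction of the inverse map possible; without it there is no canonical way to feed data into the $\uodot IA$ factor. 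Everything else is a formal consequence of the adjunctions established earlier and of the description of $\Spec_k(B'\otimes B)$ via unitalisation.
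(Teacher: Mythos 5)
Your overall strategy is exactly the paper's: reduce by the two adjunctions and the unitalisation description of $\Spec_k(B'\otimes B)$ to a natural bijection
\[
\Alg_{k'}(A, S'\oplus N)\;\cong\;\Alg_{k'}(A,S')\times\Alg^!_{k'}(IA,N),
\qquad S'=\Spec_k(B')(X),\ N=\Spec_k(B)(X),
\]
and then apply Yoneda; the paper's proof is precisely this chain, with the middle bijection asserted rather than justified, so you have correctly isolated the one step that carries content. The difficulty is that the explicit maps you give for that step do not work. A unital $k'$-algebra map $\phi\colon A\to S'\oplus N$ does \emph{not} send $IA$ into $N$: writing $\phi=\psi+\nu$ with $\psi=\pi_{S'}\circ\phi$ and $\nu=\pi_N\circ\phi$, the $S'$-component of $\phi$ on $IA$ is $\psi|_{IA}$, which is nonzero in general, since $\psi$ is an arbitrary algebra map $A\to S'$ and is not required to be compatible with the augmentations. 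Consequently your candidate $\theta=\pi_N\circ\phi|_{IA}$ is not multiplicative: the unitalisation product gives
\[
\nu(a_1a_2)=\nu(a_1)\nu(a_2)+\psi(a_1)\cdot\nu(a_2)+\psi(a_2)\cdot\nu(a_1),
\]
and the two cross terms do not vanish. The same obstruction kills the proposed inverse: for $\phi(a)=\psi(a)+\theta(\tilde a)$ with $\tilde a=a-\eps_A(a)1_A$ one computes
$\phi(a_1)\phi(a_2)-\phi(a_1a_2)=\psi(\tilde a_1)\cdot\theta(\tilde a_2)+\psi(\tilde a_2)\cdot\theta(\tilde a_1)$,
which is nonzero for a general module action; in particular, in the motivating example $B'=\Set(\Z,\Z)$, $B=K(BU)$ the action involves the nontrivial twists $\gamma(\lambda)$ and these terms do not disappear.

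So what you defer as ``purely bookkeeping'' is in fact the entire mathematical content of the step, and as set up it fails: the data classified by $\Alg_{k'}(A,S'\oplus N)$ are pairs $(\psi,\nu)$ in which $\nu$ satisfies the $\psi$-twisted multiplicativity displayed above, and under your assignments this is not literally an element of $\Alg_{k'}(A,S')\times\Alg^!_{k'}(IA,N)$. To repair the argument one must build a genuinely twisted identification --- correcting $\nu$ using the $B'$-coaction $b\mapsto b_{\{1\}}\otimes b_{\{2\}}$ (concretely, the maps $\gamma(\lambda)$) so that the cross terms are absorbed --- and this is exactly where the co-$B'$-module hypothesis has to do real work; your argument never actually invokes it. The $\gamma(s)$ twists appearing in the resulting composition formula in Section~\ref{sec:ungddops} are a symptom of precisely this phenomenon. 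In fairness, the paper's own proof is silent on the same point, but since you chose to make the bijection explicit, the explicit maps need to be the correct ones, and these are not.
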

\begin{proof} For any $k$-algebra $X$ we have isomorphisms
\begin{align*}
\Alg_k((B' \otimes B) \odot A, X) &\isom \Alg_{k'}(A, \Spec_k(B' \otimes B)(X)) \\
&\isom \Alg_{k'}(A, \Spec_k(B')(X) \oplus \Spec_k(B)(X)) \\
&\isom \Alg_{k'}(A, \Spec_k(B')(X)) \times \Alg_{k'}^!(IA, \Spec_k(B)(X)) \\
&\isom \Alg_k(B' \odot A, X) \times \Alg_k(B \uodot IA, X) \\
&\isom \Alg_k((B' \odot A) \otimes (B \uodot IA), X).\qedhere
\end{align*}
\end{proof}

We write $\cuodot$ for the non-(co-unital) composition product in the completed setting.

\section{Ungraded $K$-theory operations}
\label{sec:ungddops}

The study of the operations of ungraded $K$-theory is a classical subject in algebraic topology~\cite{Atiyah:1967}
and it is well known that the degree zero $K$-cohomology, $K(X)=K^0(X)$, of a space $X$ naturally forms a $\lambda$-ring. In this section we exhibit a concise description of the operations in a plethystic setting.

The classifying space $BU$ of the infinite unitary group is central to the study of $K$-theory and admits the structure of a non-unital ring space, with abelian group structure corresponding to the direct sum of vector bundles, and (non-unital) multiplication induced by the tensor product. Thus, since $K(BU)$ is free as a $\Z$-module, $K(BU)$ naturally admits the structure of a non-(co-unital) complete Hausdorff $\Z$-$\Z$-biring by the ungraded and non-unital analogue of \cref{thm:cohom-is-pleth}.

\begin{thm}
We have an isomorphism of non-(co-unital) complete Hausdorff $\Z$-$\Z$-birings
\begin{equation}
K(BU) \isom \Z[[\lambda^1 \iota, \lambda^2 \iota, \dots]]
\end{equation}
where $\iota$ is represented by the inclusion $BU \simeq \{0\} \times BU \subseteq \Z \times BU$. The filtration ideals are given by the kernels of the projection maps $ Z[[\lambda^1 \iota, \lambda^2 \iota, \dots ]] \to  Z[[\lambda^1 \iota, \dots , \lambda^n \iota]]$, and the non-(co-unital) biring structure is determined by
\begin{align*}
\Delta^+(\lambda^k \iota) &= \sum_{i+j=k} \lambda^i \iota \otimes \lambda^j \iota \\
\Delta^\times(\lambda^k \iota) &= P_k(\lambda^1  \iota \otimes 1, \dots, \lambda^k \iota \otimes 1 ; 1 \otimes \lambda^1 \iota, \dots, 1 \otimes \lambda^k \iota),
\end{align*}
where the $P_k$ are the universal polynomials arising in the theory of $\lambda$-rings, see~\cite[Definition 1.10]{Yau:2010}.
\end{thm}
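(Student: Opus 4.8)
The plan is to treat the underlying complete topological $\Z$-algebra and the co-operations separately: for the former one recalls the classical computation of the $K$-theory of $BU$, and for the latter one reads off $\Delta^+$ and $\Delta^\times$ from the Whitney sum and tensor product maps on the ring space $BU$, using two standard properties of the $\lambda$-operations.

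\emph{The underlying ring.} Writing $BU = \varinjlim_n BU(n)$ and using that each $K^0(BU(n))$ is the completion of the representation ring $R(U(n))$ at its augmentation ideal, the classical computation (cf.\ \cite{Atiyah:1967}) identifies $K(BU)$ with the complete topological $\Z$-algebra $\Z[[\lambda^1\iota, \lambda^2\iota, \dots]]$: the topological generators $\lambda^j\iota$ arise from the exterior power classes of $R(U(n))$ by an invertible triangular change of variables incorporating the dimension shift to the reduced class $\iota$ (whose image under $K(BU)\to K(\mathrm{pt})$ is $0$), and one checks that the canonical filtration is the one described in the statement. Since $K_*(BU)$ is free, the completed K\"unneth map $K(BU)\cotimes K(BU) \to K(BU\times BU)$ is an isomorphism of rings, and the non-(co-unital) biring structure on $K(BU)$ is the one supplied by the ungraded, non-unital analogue of \cref{thm:cohom-is-pleth}; in particular $\Delta^+$ and $\Delta^\times$ are the composites of the K\"unneth isomorphism with the pullbacks along the Whitney sum and tensor product maps.

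\emph{The co-operations.} The additive structure on $BU$ is induced by Whitney sum $\oplus\colon BU\times BU\to BU$, so the universal class satisfies $\oplus^*\iota = \iota\otimes1 + 1\otimes\iota$ in $K(BU)\cotimes K(BU)$. Naturality of $\lambda^k$ and the exponential law $\lambda^k(x+y) = \sum_{i+j=k}\lambda^i(x)\lambda^j(y)$ (with $\lambda^0 = 1$) then give
\[ \Delta^+(\lambda^k\iota) = \lambda^k(\iota\otimes1 + 1\otimes\iota) = \sum_{i+j=k}(\lambda^i\iota\otimes1)(1\otimes\lambda^j\iota) = \sum_{i+j=k}\lambda^i\iota\otimes\lambda^j\iota. \]
The (non-unital) multiplicative structure on $BU$ is induced by tensor product $\otimes\colon BU\times BU\to BU$, which represents the ring multiplication of $K^0$, so $\otimes^*\iota = (\iota\otimes1)(1\otimes\iota)$, a product in the $\lambda$-ring $K(BU\times BU)$. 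Using naturality to identify $\lambda^i(\iota\otimes1) = \lambda^i\iota\otimes1$ and $\lambda^i(1\otimes\iota) = 1\otimes\lambda^i\iota$, the product axiom for a $\lambda$-ring gives
\[ \Delta^\times(\lambda^k\iota) = \lambda^k\big((\iota\otimes1)(1\otimes\iota)\big) = P_k(\lambda^1\iota\otimes1,\dots,\lambda^k\iota\otimes1;\,1\otimes\lambda^1\iota,\dots,1\otimes\lambda^k\iota). \]
Finally, the remaining co-operations are forced: $\eps^+$ and $\sigma$ are determined by $\Delta^+$ (as for any cogroup object in $\Z$-algebras) and $\gamma(\kappa)$ is determined by $\Delta^+$ together with the normalisation $\gamma(1) = \mathrm{id}$; explicitly one reads off $\eps^+(\lambda^k\iota) = 0$ for $k\ge1$ from the basepoint inclusion and $\sigma(\lambda^k\iota) = \lambda^k(-\iota)$ from the group inverse of $BU$. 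Well-definedness, continuity and the coalgebra coherences are automatic, since every map in sight comes from a map of spaces transported along the K\"unneth isomorphism.

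I expect the main obstacle to be the first step: making the classical description of $K(BU)$ as a complete topological ring fully precise --- checking that the $\lambda^j\iota$ are free topological generators and that the canonical filtration matches the one in the statement --- together with the routine but fiddly bookkeeping of carrying $\oplus^*$ and $\otimes^*$ through the completed K\"unneth isomorphism within the non-(co-unital) framework. The genuinely $K$-theoretic input beyond this setup is light, being essentially just the exponential law and the product formula for $\lambda$-operations.
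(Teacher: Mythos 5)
Your proposal is correct and follows essentially the same route as the paper, which simply cites the well-known identification of $K(BU)$ with the power series ring on the $\lambda$-classes and notes that the co-operations follow from the exponential law and product formula of $\lambda$-ring theory; you have merely written out the details (the K\"unneth transport of $\oplus^*$ and $\otimes^*$) that the paper leaves implicit.
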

\begin{proof} The description of $K(BU)$ as a power series ring in the lambda operations is well-known
and the remaining structure follows directly from the theory of  $\lambda$-rings.
\end{proof}

Since $\Z \times BU$ is the representing space for ungraded $K$-theory, studying the operations corresponds to understanding the complete Hausdorff $\Z$-plethory $K(\Z \times BU)$.

\begin{prop}\label{isobirings}
	We have an isomorphism of (ungraded) complete Hausdorff $\Z$-$\Z$-birings,
	$$ K(\Z \times BU) \isom \Set(\Z, \Z) \cotimes K(BU) $$
	where the $\Z$-$\Z$-biring structure is specified in \cref{ex:biring-adjoin-unit}.
\end{prop}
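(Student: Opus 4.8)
The plan is to exploit the identification $\Z \times BU \simeq \Z_+ \wedge BU$ of ring spaces, but more practically to work directly with the algebra. First I would record the multiplicative splitting at the level of spaces: $\Z \times BU$ is, as a ring space, obtained from the non-unital ring space $BU$ by freely adjoining a unit indexed over $\Z$; concretely, as a space it is the disjoint union $\coprod_{n \in \Z} (\{n\} \times BU)$, and the abelian group structure (Whitney sum) and multiplication (tensor product) are compatible with the projection to $\Z$ in exactly the way that realises $\Z \times BU$ as the unitalisation of $BU$ over the constant ring scheme at $\Z$. Passing to $K$-cohomology, $K(\Z \times BU) = \prod_{n \in \Z} K(\{n\} \times BU) = \prod_{n \in \Z} K(BU)$ as a complete Hausdorff $\Z$-algebra, with the profinite-type filtration matching that of $\Set(\Z,\Z) \cotimes K(BU)$. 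This already gives the underlying $\Z$-algebra isomorphism, since $\Set(\Z,\Z) \cotimes K(BU)$ has underlying algebra $\prod_{\lambda \in \Z} K(BU)$ with the sequences-with-finitely-many-nonzero-entries filtration.

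Next I would match up the co-$\Z$-algebra structure maps. The key observation is that all the relevant structure on $K(\Z \times BU)$ is induced by maps of ring spaces, and these maps are built from the ring space structure of $\Z \times BU$, which is itself the unitalisation of the ring space $BU$. So the co-addition, co-multiplication, co-zero, co-inverse, co-unit and co-$\Z$-linear structure on $K(\Z\times BU)$ are exactly those obtained by applying the unitalisation construction for birings (the construction of \cref{ex:biring-adjoin-unit}, which takes a non-(co-unital) $k$-$k$-biring $B$ to the $k$-$k$-biring $\Set(k,k) \cotimes B$) to the non-(co-unital) $\Z$-$\Z$-biring $K(BU)$. In other words, I want to check that the geometric unitalisation of ring spaces induces, on $K$-cohomology, precisely the algebraic unitalisation of birings. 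This is a compatibility statement: the functor $\Spf_\Z(K(-))$ sends the ring space $\Z \times BU$ to the functor $A \mapsto COI_\Z(A) \oplus \Spf_\Z(K(BU))(A)$, which by the formula of \cref{ex:biring-adjoin-unit} represents $\Set(\Z,\Z) \cotimes K(BU)$. To nail this down I would verify the $\Set(\Z,\Z)$-comodule (equivalently, $COI_\Z$-module) structure on $K(BU)$ coming from the $\Z$-grading of $\Z \times BU$ agrees with the one in \cref{ex:biring-adjoin-unit}, namely $b \mapsto \sum_\lambda \chi_\lambda \otimes \gamma(\lambda)(b)$; here $\gamma(\lambda)$ is the co-$\Z$-linear structure on $K(BU)$, which topologically is the operation "multiply by $\lambda \in \Z \subseteq K(X)$", and geometrically corresponds to the action of $\Z$ on $\Z \times BU$ by translating the $\Z$-coordinate, which is exactly how the component decomposition interacts with the additive structure.

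The main obstacle will be this last verification of compatibility of the comodule structures, i.e. showing that the abstract unitalisation of birings really does compute $K(\Z \times BU)$ on the nose, structure maps and all, rather than just up to underlying algebra. This is essentially bookkeeping with K\"unneth isomorphisms and the explicit formulas, but one has to be careful because the co-multiplication $\Delta^\times$ on a unitalised biring (the formula displayed before \cref{ex:biring-adjoin-unit}) is genuinely complicated, mixing $\Delta^\times$, $\Delta^+$ and the coaction on $K(BU)$, and one must check it reproduces the co-multiplication on $K(\Z\times BU)$ induced by the tensor-product multiplication $\mu \colon (\Z \times BU) \times (\Z \times BU) \to \Z \times BU$. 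I would handle this by the representable/formal-scheme description rather than by direct computation: identify $\Spf_\Z(K(\Z \times BU))$ as a $\Z$-algebra formal $\Z$-scheme, observe it is the unitalisation (in the sense of the scheme-level construction preceding \cref{ex:biring-adjoin-unit}) of the non-unital $\Z$-algebra formal $\Z$-scheme $\Spf_\Z(K(BU))$, and then invoke the anti-equivalence between complete Hausdorff $\Z$-$\Z$-birings and solid formal $\Z$-algebra $\Z$-schemes to transport this back to the claimed biring isomorphism. The only genuinely space-level input needed is that $K(BU) \to K(\Z \times BU)$ is, appropriately, the inclusion of the augmentation ideal functor into the unitalised functor, which follows from the cofibre sequence $BU \simeq \{0\}\times BU \hookrightarrow \Z \times BU$ together with freeness of $K_*(BU)$ and $K_*(\Z\times BU)$.
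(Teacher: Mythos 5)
Your overall strategy --- realise $\Z \times BU$ as the unitalisation of the non-unital ring space $BU$ and argue at the level of representable functors that $\Spf_\Z(K(\Z \times BU))$ is the scheme-level unitalisation of $\Spf_\Z(K(BU))$ along $COI_\Z$, then transport back through the anti-equivalence --- is sound, and in substance it is the same argument as the paper's, just packaged through formal schemes rather than carried out on elements. The paper takes the K\"unneth isomorphism $\theta$, notes it is automatically a Hopf algebra map because the additive structure on $\Z \times BU$ is the product structure, and then verifies $\Delta^\times$ by evaluating $(\pi_1^*\chi_d\, \pi_2^* x)(\alpha\beta)$ for $\alpha, \beta \in K(X)$ using $\pi_1^* f(\alpha) = f(\eps(\alpha))$ and $\pi_2^* x(\alpha) = x(\alpha - \eps(\alpha))$; that element computation is exactly your ``geometric unitalisation equals algebraic unitalisation'' statement evaluated on the universal example, via \cref{prop:coalg-from-action}. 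Your packaging buys a cleaner conceptual statement; the paper's buys an explicit formula without invoking the unitalisation machinery for schemes.

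There is, however, an error at precisely the step you single out as the crux. The $COI_\Z(A)$-module structure on $\Spf_\Z(K(BU))(A)$ that you must match with $((a_\lambda)\cdot\phi)(b) = \sum_\lambda \phi(\gamma(\lambda)(b))\,a_\lambda$ from \cref{ex:biring-adjoin-unit} is the \emph{multiplicative} action of the unit component on the augmentation ideal, i.e.\ the term $s_1 \cdot r_2$ in $(s_1+r_1)(s_2+r_2)$. Geometrically, $\gamma(\lambda)$ is therefore represented not by translation of the $\Z$-coordinate $(n,\xi) \mapsto (n+\lambda,\xi)$ --- that is the \emph{additive} action, which governs $\Delta^+$ and is already absorbed into the Hopf-algebra part of the K\"unneth isomorphism --- but by the restriction of the tensor-product multiplication to $(\{\lambda\}\times *) \times (\{0\}\times BU) \to \{0\}\times BU$, i.e.\ multiplication by the trivial virtual bundle of rank $\lambda$ (the $\lambda$-fold $H$-space multiple on $BU$). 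If you run your compatibility check with the translation map you will reprove the coaddition statement and never see $\Delta^\times$. With the correct identification the check reduces to the identity $\alpha\beta = (\alpha-\eps\alpha)(\beta-\eps\beta) + \eps(\alpha)(\beta-\eps\beta) + \eps(\beta)(\alpha-\eps\alpha) + \eps(\alpha)\eps(\beta)$ in $K(X)$, which is exactly what the paper's displayed computation exploits; so the gap is fixable, but as written your key verification is aimed at the wrong structure map.
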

\begin{proof} By the K\"unneth theorem, we have an isomorphism of rings.
We write \[\theta: \Set(\Z, \Z) \cotimes K(BU)\to K(\Z \times BU)\] for this isomorphism.
 Since the abelian group structure on $\Z \times BU$ is given by the product structure, this is an isomorphism of Hopf algebras. It remains to show that $\theta$ respects the co-multiplicative structure. The element $\chi_d \otimes x \in \Set(\Z, \Z) \cotimes K(BU)$ corresponds to $\pi_1^*\chi_d \pi_2^*x$ under the K\"unneth isomorphism $\theta$, where $\pi_1, \pi_2$ denote the canonical projections. By \cref{prop:coalg-from-action}, we can compute the comultiplication by considering the action of $\pi_1^*\chi_d \pi_2^*x$ on general $\alpha, \beta \in K(X)$. Assume that $X$ is connected and thus has a unique up to homotopy choice of base point. Denote the map induced by the inclusion of the base point by $\eps \colon K(X) \to \Z$. The case of general $X$  will follow by considering each connected component individually. For $f \in \Set(\Z, \Z)$ and $x \in K(BU)$, we have $\pi_1^*f(\alpha) = f(\eps(\alpha))$ and $\pi_2^*x(\alpha) = x(\alpha - \eps(\alpha))$. In $K(X)$,
\begin{align*}
	(\pi_1^*\chi_d &\pi_2^*x)(\alpha \beta)
	\\
	 &= \chi_d(\eps(\alpha)\eps(\beta)) x\left[\alpha \beta - \eps(\alpha)\eps(\beta)\right]  \\
	&= \sum_{rs=d} \chi_r(\eps(\alpha)) \chi_s(\eps(\beta)) x\left[(\alpha  - \eps(\alpha)(\beta - \eps(\beta)) + \eps(\alpha)(\beta - \eps(\beta)) + \eps(\beta)(\alpha - \eps(\alpha))\right] 	\\
	&= \sum_{rs = d} \chi_r(\eps(\alpha)) \chi_s(\eps(\beta))
	\pi_2^*\left[ x_{(1)[1]} \gamma(\eps(\beta))(x_{(3)}) \right](\alpha)  \pi_2^*\left[ x_{(1)[2]} \gamma(\eps(\alpha))(x_{(2)}) \right](\beta) \\
	&= \sum_{rs = d} \chi_r(\eps(\alpha)) \chi_s(\eps(\beta))
	\pi_2^*\left[x_{(1)[1]} \gamma(s)(x_{(3)}) \right](\alpha)  \pi_2^*\left[ x_{(1)[2]} \gamma(r)(x_{(2)}) \right](\beta) \\
	&= \sum_{rs = d} \left(\pi_1^* \chi_r \pi_2^*\left[ x_{(1)[1]} \gamma(s)(x_{(3)}) \right] \right)(\alpha) \left( \pi_1^* \chi_s \pi_2^*\left[ x_{(1)[2]} \gamma(r)(x_{(2)}) \right] \right)(\beta)
\end{align*}
where the fourth equality follows since $\chi_i(j) = \delta_{ij}$, the Kronecker delta. Hence
$$ \Delta^\times (\pi_1^*\chi_d \pi_2^*x) = \sum_{rs = d} \pi_1^* \chi_r \pi_2^*\left[ x_{(1)[1]} (\gamma(s)(x_{(3)}))\right]  \otimes  \pi_1^* \chi_s \pi_2^*\left[ x_{(1)[2]} (\gamma(r)(x_{(2)}))\right]. $$
Therefore the K\"unneth isomorphism respects the comultiplication $\Delta^\times$. To see that  the co-unit is preserved, notice that $(\pi_1^*f \pi_2^*x)(1) = f(1)x(0) = \eps^\times(f)\eps^+(x)$. \end{proof}

Recall that for a based space $X$, the reduced $K$-theory, which we denote $K(X, o)$, is the kernel
of the augmentation given by the map induced by the inclusion of the basepoint.

\begin{prop}
We have a map of rings $K(BU) \cuodot K(BU, o) \to K(BU)$ determined by
$$ \lambda^i \iota \circ \lambda^j \iota = P_{i,j}(\lambda^1 \iota, \dots, \lambda^{ij} \iota), $$
where the $P_{i,j}$ are the universal polynomials arising in the theory of $\lambda$-rings, see~\cite[Definition 1.10]{Yau:2010}.
\end{prop}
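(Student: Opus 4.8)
The plan is to realise the map as composition of operations and then read off its effect on the polynomial generators $\lambda^k\iota$. The universal class $\iota\in K(BU) = K^0(BU)$, represented by $BU\simeq\{0\}\times BU\hookrightarrow\Z\times BU$, exhibits $K(BU)$ as the ring of operations from reduced $K$-theory $K(-,o)$ to unreduced $K$-theory $K(-)$, while $K(BU,o)$ is the ring of operations $K(-,o)\to K(-,o)$. Composition of operations corresponds to sending a generator $r\uodot s$ to $s^*(r)$, where $s\in K(BU,o)$ is regarded as a based self-map of $BU$; this is the ungraded, non-(co-unital) analogue of the composition $r\circ s = s^*(r)$ constructed in the proof of \cref{thm:cohom-is-pleth}. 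So the first task is to check that $r\uodot s\mapsto s^*(r)$ descends to a well-defined ring map out of the completed composition product $K(BU)\cuodot K(BU,o)$.

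For this one verifies that $r\uodot s\mapsto s^*(r)$ respects the relations defining $\cuodot$. Since $s^*$ is a ring map, $r\mapsto s^*(r)$ is a ring map. Each remaining relation is matched by a naturality statement: expressing $s_1+s_2$, $s_1 s_2$ and $\kappa s$ in terms of the additive $H$-structure, the (non-unital) multiplication, and the map inducing $\gamma(\kappa)$ on $BU$, and applying the K\"unneth isomorphism $K^0(BU\times BU)\cong K^0(BU)\cotimes K^0(BU)$, identifies $(s_1+s_2)^*(r)$, $(s_1 s_2)^*(r)$ and $(\kappa s)^*(r)$ with $(r_{(1)}\uodot s_1)(r_{(2)}\uodot s_2)$, $(r_{[1]}\uodot s_1)(r_{[2]}\uodot s_2)$ and $\gamma(\kappa)(r)\uodot s$ respectively, precisely because $\Delta^+$, $\Delta^\times$ and $\gamma(\kappa)$ on $K(BU)$ were defined as the maps induced by these very structure maps; and $s=0$ gives $0^*(r) = \eps^+(r)$ by definition of $\eps^+$. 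Continuity, and the fact that the image lies in $K(BU)$ rather than some larger completion, come for free once one works with $\cuodot$: the pullbacks are compatible with the skeletal filtrations defining the topologies. In essence this is the ungraded, non-unital case of \cref{thm:cohom-is-pleth}.

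It then remains to evaluate the resulting map on generators. As $K(BU)$ is topologically generated as a ring by $\lambda^1\iota,\lambda^2\iota,\dots$, the map $r\uodot s\mapsto s^*(r)$ is a ring map, and $K(BU,o)$ is likewise topologically generated by $\lambda^1\iota,\lambda^2\iota,\dots$, the map is determined by the values $\lambda^i\iota\circ\lambda^j\iota$. The composite operation here sends a reduced class $y$ to $\lambda^i(\lambda^j(y))$, so its representing class is obtained by evaluating at the universal class; concretely, using $\lambda^i\iota = \lambda^i(\iota)$ and that pullback along a based map is a map of $\lambda$-rings,
\[ \lambda^i\iota\circ\lambda^j\iota = (\lambda^j\iota)^*\bigl(\lambda^i(\iota)\bigr) = \lambda^i\bigl((\lambda^j\iota)^*(\iota)\bigr) = \lambda^i(\lambda^j\iota), \]
the last equality because the pullback of $\iota$ along the based map classifying $\lambda^j\iota\in K(BU,o)$ is $\lambda^j\iota$ itself. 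Finally $K^0(BU)$ is a $\lambda$-ring (an inverse limit of the $\lambda$-rings $K^0$ of the finite skeleta), so the $\lambda^i\lambda^j$ axiom gives $\lambda^i(\lambda^j\iota) = P_{i,j}(\lambda^1\iota,\dots,\lambda^{ij}\iota)$, as required.

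The main obstacle is the bookkeeping of the second step: matching each defining relation of the non-(co-unital) composition product with the corresponding co-operation of $K(BU)$, and matching the filtration topology with the skeletal filtration. No new idea is needed there beyond \cref{thm:cohom-is-pleth}; once the map is known to exist, the formula on generators is just the definition of a $\lambda$-ring together with naturality of the $\lambda$-operations.
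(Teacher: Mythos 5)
Your proposal is correct and follows the same route as the paper, which simply records that the statement "is immediate from the properties of $\lambda$-rings"; you have filled in the details that the paper leaves implicit (realising the map as $r \uodot s \mapsto s^*(r)$, checking the defining relations of $\cuodot$ against the co-operations as in the proof of Theorem~\ref{thm:cohom-is-pleth}, and then invoking the $\lambda^i\lambda^j$ axiom on the universal class). No discrepancy to report.
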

\begin{proof} This is immediate from the properties of $\lambda$-rings. \end{proof}

For based spaces $X, Y$, the  cohomological K\"unneth isomorphism induces an isomorphism of non-unital rings on reduced cohomology
$$ K(X \times Y, o) \isom K(X, o) \cucoprod K(Y,o ). $$

Recall that the co-zero map, which defines the augmentation ideal, on $\Set(\Z, \Z)$ is given by the evaluation map $\eps^+ :  \Set(\Z, \Z)\to \Z$, with $\eps^+(f) = f(0)$. We have an isomorphism
$$ I\left(\Set(\Z,\Z) \cotimes K(BU) \right) \isom I\Set(\Z,\Z) \cucoprod K(BU, o). $$

We now \emph{define} the appropriate composition on $\Set(\Z, \Z) \cotimes K(BU)$ by the following sequence of maps, where $\phi_R \colon \Set(\Z, \Z) \codot \Z \to \Z$ and $\phi_L \colon \Z \cuodot I\Set(\Z, \Z) \to \Z$  denote the canonical isomorphisms.
\begin{center}
\scalebox{.9}{
\begin{tikzcd}
\left(\Set(\Z, \Z) \cotimes K(BU) \right) \codot \left(\Set(\Z, \Z) \cotimes K(BU) \right)
\arrow{d}{\isom}
\\
\left( \Set(\Z, \Z) \codot \Set(\Z, \Z) \right) \cotimes \left( \Set(\Z, \Z) \codot K(BU) \right) \cotimes \left( K(BU) \cuodot I\Set(\Z, \Z) \right) \cotimes \left( K(BU) \cuodot K(BU, o) \right)
\arrow{d}{1 \cotimes 1 \codot \eps^+ \cotimes \eps^+ \cuodot 1 \cotimes 1}
\\
\left( \Set(\Z, \Z) \codot \Set(\Z, \Z) \right) \cotimes \left( \Set(\Z, \Z) \codot \Z \right) \cotimes \left( \Z \cuodot I\Set(\Z, \Z) \right) \cotimes \left( K(BU) \cuodot K(BU, o) \right)
\arrow{d}{\circ \cotimes \phi_R \cotimes \phi_L \cotimes \circ}
\\
\Set(\Z, \Z) \cotimes \Z \cotimes \Z \cotimes K(BU)
\arrow{d}{\isom}
\\
\Set(\Z, \Z) \cotimes K(BU)
\end{tikzcd}
}
\end{center}

On the level of elements, for $d\in\Z$, $g\in\Set(\Z,\Z)$, $x,y \in K(BU)$, this reads as
$$ (\chi_d \otimes x) \circ (g \otimes y) = \sum_{rs=d} \chi_r(\eps^+(y)) \chi_s \circ g \otimes \gamma(s)(x) \circ \left(y - \eps^+ y \right),  $$
with identity given by $1 \otimes \lambda^1\iota + \iota \otimes 1$. Note that composition respects sums on  the left so it is enough to specify it on the above elements.

\begin{thm}
	\label{thm:ungd-pleth-isom}
	We have an isomorphism of ungraded complete Hausdorff $\Z$-plethories
	$$ K(\Z \times BU) \isom \Set(\Z, \Z) \cotimes K(BU). $$
\end{thm}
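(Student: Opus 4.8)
The plan is to upgrade the isomorphism of birings $\theta \colon \Set(\Z,\Z) \cotimes K(BU) \to K(\Z \times BU)$ from Proposition~\ref{isobirings} to an isomorphism of plethories. Since $\theta$ is already known to be an isomorphism of complete Hausdorff $\Z$-$\Z$-birings, the only thing left to check is that it is a morphism of monoids in $(\CBiring_{\Z,\Z}, \codot, \mathcal{I})$: that it intertwines the composition map $\circ$ defined on $\Set(\Z,\Z) \cotimes K(BU)$ by the displayed diagram with the composition map on $K(\Z \times BU)$ coming from Theorem~\ref{thm:cohom-is-pleth} (namely $r \circ s = s^*(r)$), and that it carries the identity element $1 \otimes \lambda^1\iota + \iota \otimes 1$ to the universal class $\iota \in K(\Z \times BU)$.

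The identity clause is immediate: under $\theta$ the universal class on $\Z \times BU$ decomposes, via the product decomposition and the Künneth splitting, as the sum of the pullback of the universal class on the $\Z$ factor (which is $\iota \in \Set(\Z,\Z)$) and the pullback of the universal class on $BU$ (which is $\lambda^1\iota \in K(BU)$), matching $\iota \otimes 1 + 1 \otimes \lambda^1\iota$. For the composition clause, I would exploit Proposition~\ref{prop:coalg-from-action}: the composition product $P \codot P$ represents $\Spf(P)\circ\Spf(P)$, so a biring map out of $P \codot P$ is determined by how the corresponding operations compose when acting on $P$-algebras, and more concretely it suffices to check the composite $\theta \circ (\text{defined }\circ) = (r,s)\mapsto s^*(r)$ on the generating elements $\chi_d \otimes x$ (with $x\in K(BU)$ a monomial in the $\lambda^i\iota$) composed against $g \otimes y$, since composition respects sums on the left and both sides are continuous biring maps. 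On the right-hand side this is the computation of $(g \otimes y)^*(\chi_d \otimes x)$, i.e. of the operation $\pi_1^*\chi_d\,\pi_2^*x$ precomposed with the map $\Z\times BU \to \Z\times BU$ classified by $(g\otimes y)$; decomposing an arbitrary class $\alpha$ as $\eps(\alpha) + (\alpha - \eps(\alpha))$ exactly as in the proof of Proposition~\ref{isobirings}, and using $\pi_1^*f(\alpha) = f(\eps\alpha)$, $\pi_2^*x(\alpha) = x(\alpha - \eps\alpha)$, one reads off precisely the element-level formula
$$ (\chi_d \otimes x)\circ(g\otimes y) = \sum_{rs=d}\chi_r(\eps^+ y)\,\chi_s\circ g \otimes \gamma(s)(x)\circ(y - \eps^+ y), $$
which is by construction the value of the composition defined via the displayed diagram.

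Concretely, I would structure the verification by peeling the diagram apart: the decomposition map in the top row is the non-(co-unital) Künneth splitting combined with the isomorphisms $B \odot(A\otimes A')\cong (B\odot A)\otimes(B\odot A')$ and its non-co-unital variant (the displayed isomorphisms and Propositions on $\uodot$ earlier in Section~\ref{sec:toppleth}), all of which are natural and known; the map $1\cotimes 1\codot\eps^+\cotimes\eps^+\cuodot 1\cotimes 1$ records evaluation at the basepoint, matching the appearance of $\eps(\alpha),\eps(\beta)$; and $\circ\cotimes\phi_R\cotimes\phi_L\cotimes\circ$ is where $\chi_s\circ g$ (composition in $\Set(\Z,\Z)$) and $\gamma(s)(x)\circ(y-\eps^+ y)$ (the map $K(BU)\cuodot K(BU,o)\to K(BU)$ from the preceding proposition) enter. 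Running a general action-theoretic element through these four arrows and comparing with the chain of substitutions in the proof of Proposition~\ref{isobirings} gives agreement.

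The main obstacle is bookkeeping rather than conceptual: the twisted comultiplication formula of Example~\ref{ex:biring-adjoin-unit} means the operation $\chi_d \otimes x$ acting on a product $\alpha\beta$ unpacks into a sum over $rs = d$ with the $\gamma(s), \gamma(r)$ twists entangled with the Sweedler components $x_{(1)[1]}, x_{(2)}, x_{(3)}$, and one must track these consistently through both the geometric computation (pulling back along $g\times\text{(classifying map of }y)$ after a basepoint splitting of $\Z\times BU$) and the purely algebraic definition via the diagram, checking that the two genuinely coincide term by term rather than merely up to reindexing. A secondary technical point is justifying that checking on the topological generators $\chi_d\otimes x$ with $g\otimes y$ suffices: this uses that $\Set(\Z,\Z)\cotimes K(BU)$ is topologically generated as a $\Z$-algebra by such elements, that $\circ$ is continuous and additive on the left, and that both composition maps are morphisms of complete Hausdorff birings, so they are determined by their values on a dense generating set. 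Once these are in place the theorem follows.
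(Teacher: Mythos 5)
Your proposal is correct and follows essentially the same route as the paper: having the biring isomorphism from Proposition~\ref{isobirings}, you verify the identity element and then check the element-level composition formula by acting on a general class $\alpha \in K(X)$ via the basepoint splitting $\alpha = \eps(\alpha) + (\alpha - \eps(\alpha))$, which is exactly the paper's computation. The only cosmetic difference is that you spend more time justifying the reduction to topological generators and unpacking the defining diagram, both of which the paper treats as implicit.
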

\begin{proof}
By Proposition~\ref{isobirings}, we have an isomorphism of birings and it remains to check compatibility with composition.
Let $d\in \Z$, $g \in \Set(\Z, \Z)$, $x,y \in K(BU)$ and $\alpha \in K(X)$. We have
\begin{align*}
	\theta(\chi_d \otimes x) \circ \theta(g \otimes y)
	&=(\pi_1^* \chi_d \pi_2^* x) \circ (\pi_1^* g \pi_2^* y)(\alpha)\\
	&= (\pi_1^* \chi_d \pi_2^* x)(g(\eps(\alpha)) y(\alpha - \eps(\alpha))) \\
	&=(\pi_1^* \chi_d)(g(\eps(\alpha)) y(\alpha - \eps(\alpha))) (\pi_2^* x)(g(\eps(\alpha)) y(\alpha - \eps(\alpha))) \\
	&= \left[\chi_d(g(\eps(\alpha)) \eps^+(y)) \right] \left[ \gamma(g(\eps(\alpha)))(x) \circ \left(y- \eps^+(y) \right)(\alpha - \eps(\alpha))\right] \\
	&= \sum_{rs=d}\left[\chi_r(\eps^+(y)) \chi_s(g(\eps(\alpha))) \right] \left[ \gamma(g(\eps(\alpha)))(x) \circ \left(y- \eps^+(y) \right)(\alpha - \eps(\alpha))\right] \\
	&= \sum_{rs=d}\left[\chi_r(\eps^+(y)) \chi_s(g(\eps(\alpha))) \right] \left[ \gamma(s)(x) \circ \left(y- \eps^+(y) \right)(\alpha - \eps(\alpha))\right] \\
	&= \sum_{rs=d} \pi_1^*\left[\chi_r(\eps^+(y)) \chi_s \circ g \right] \pi_2^*\left[ \gamma(s)(x) \circ \left(y- \eps^+(y) \right)\right](\alpha)\\
	&=\theta((\chi_d \otimes x) \circ (g \otimes y)).
\end{align*}

Finally, we note that $(\pi_1^* 1 \pi_2^* \lambda^1\iota + \pi_1^* \iota \pi_2^* 1)(\alpha) = \alpha - \eps(\alpha) + \eps(\alpha) = \alpha$.
\end{proof}

\section{Plethories with looping}
\label{sec:looping}

The standard definition~\cite{Boardman:1995} of a (graded) generalised cohomology theory is a $\Z$-graded collection of well-behaved functors $E^n(-) \colon \Ho \to \Ab$ together with \emph{suspension isomorphisms}.
For a based space $X$, the corresponding reduced cohomology groups are denoted $E^n(X, o)$ and are defined
as the kernel of the map induced by inclusion of the base point, as we already saw in the case of $K$-theory.
The theory is extended to pairs, by defining the cohomology of a pair to be the reduced cohomology of the quotient
space.
The suspension isomorphisms can be expressed as isomorphisms of abelian groups 
\[\Sigma \colon E^n(X) \to E^{n+1}(S^1 \times X, o \times X)\]
 for all spaces $X$ and all $n \in \Z$,
or equivalently 
\[\Sigma \colon E^n(X, o) \isom E^{n+1}(\Sigma X, o)\]
 on reduced cohomology groups where $\Sigma X = S^1 \wedge X$ denotes the reduced suspension.

The suspension isomorphisms impose additional structure on the algebras over a plethory of unstable cohomology operations. Since plethories are \emph{precisely} the structure which acts on algebras, we will need extra structure to encode this information.

Recall that for a based operation $r \colon E^n(-) \mapsto E^m(-)$, there is the \emph{looped operation} $\Omega r \colon E^{n-1}(-) \mapsto E^{m-1}(-)$ defined by the following commutative diagram.

	\begin{center}
				\begin{tikzcd}
					E^{n-1}(X) \arrow{r}{\Sigma} \arrow{d}{\Omega r} & E^n(S^1 \times X, o \times X) \arrow{d}{r} \\
					E^{m-1}(X) \arrow{r}{\Sigma} & E^m(S^1 \times X, o \times X)
				\end{tikzcd}
	\end{center}
	
\begin{df}
Let $P$ be a complete Hausdorff $k$-plethory. We define the \emph{augmentation ideal} $IP$, \emph{primitives} $\Prim(P)$ and \emph{indecomposables} $\Indecomp(P)$ by
\begin{align*}
IP &= \ker \epsilon^+, \\
\Prim(P) &= \{x \in P \st \Delta^+(x) = 1 \otimes x + x \otimes 1\}, \\
\Indecomp(P) &= \frac{IP}{(IP)^2}.
\end{align*}
The additional structure of a plethory induces additional structure on these familiar constructions from Hopf algebra theory as detailed in the ungraded setting in \cite{BorgerW:2005} and the graded setting in \cite{Mycroft:2017}. These constructions carry over to the topological context without difficulty.
\end{df}

\begin{df}
	\label{df:looping}
	We define a \emph{$k$-plethory with looping} to be a complete Hausdorff $k$-plethory $P$ equipped with a continuous bidegree $(-1,-1)$ $k$-module map $\Omega \colon IP \to IP$ satisfying the following properties.
	\begin{enumerate}
		\item $\Omega$ is zero on $(IP)^2$ and takes values in primitives.
		That is, $\Omega$ factors as $ IP \xrightarrow{\pi} \Indecomp(P) \to \Prim(P) \subseteq IP$, where $\pi$ denotes the quotient map.
		\item For $r \in IP$, $\Delta^\times (\Omega r) = (-1)^{\deg_*(r_{[1]})} \sigma^{\deg_\bullet(r_{[1]})} r_{[1]} \otimes \Omega r_{[2]}$.
		\item For $r,s \in IP$, $\Omega(r \circ s) = \Omega r \circ \Omega s$.
		\item For all $n \in Z$, $\Omega(\iota_n) = \iota_{n-1}$.
	\end{enumerate}

	A map of plethories $f \colon P \to P'$ is a map of $k$-plethories with looping if $\Omega f(r) = f \Omega(r)$ for all $r \in P$. We denote the category of $k$-plethories with looping by $\LoopPleth_k$.
\end{df}

\begin{thm}
	Let $E^*(-)$ be a graded cohomology theory. If $E_*(\rE_n)$ is a free $E^*$-module for each $n \in \mathbb{Z}$ then $E^*(\rE_\bullet)$ is an $E^*$-plethory with looping.
\end{thm}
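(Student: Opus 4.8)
The plan is to upgrade the previous theorem, \cref{thm:cohom-is-pleth}, by exhibiting the looping operator $\Omega$ and checking the four axioms of \cref{df:looping}. The underlying $E^*$-plethory is already in hand from \cref{thm:cohom-is-pleth}; the only new ingredient is the map $\Omega\colon IE^*(\rE_\bullet)\to IE^*(\rE_\bullet)$ and its compatibility with the biring and composition structure. First I would define $\Omega$ geometrically: an element $r\in IE^{m}(\rE_n)$ represents a based operation $r\colon E^{n}(-)\to E^{m}(-)$ (the augmentation ideal condition $\eps^+(r)=0$ is exactly basedness), and I set $\Omega r\in E^{m-1}(\rE_{n-1})$ to be the class representing the looped operation $\Omega r$ described by the commutative square just before \cref{df:looping}, i.e. $\Sigma\circ\Omega r=r\circ\Sigma$ as operations $E^{n-1}(-)\to E^{m-1}(-)$. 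Concretely, on the representing spaces this is $\Sigma^{-1}$ applied to the composite $E^{n-1}(\rE_{n-1})\xrightarrow{\Sigma}E^n(S^1\wedge\rE_{n-1})\xrightarrow{r}E^m(S^1\wedge\rE_{n-1})$ evaluated on the universal class; equivalently it is the image of $r$ under $E^*(\Omega)$ where $\Omega\rE_n\simeq\rE_{n-1}$ is the structure map of the $\Omega$-spectrum, suitably desuspended. Continuity and the bidegree $(-1,-1)$ statement are immediate from this description, since suspension shifts $\bullet$-degree and $*$-degree each by one and $E^*(\Omega\rE_n)$ is computed from a free module so no completion pathology intrudes.

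Next I would verify the four axioms in turn. Axiom (4), $\Omega(\iota_n)=\iota_{n-1}$, is essentially the definition: $\iota_n$ is the universal class, and applying $E^*$ of the spectrum structure map to it returns (the suspension of) $\iota_{n-1}$ by naturality of the suspension isomorphism. Axiom (3), $\Omega(r\circ s)=\Omega r\circ\Omega s$, follows from the fact that looping is functorial on based operations: $\Omega(rs)$ fits in the pasted square obtained by stacking the defining squares for $r$ and for $s$, so the looped composite is the composite of the loopings. Axiom (1), that $\Omega$ kills decomposables and lands in primitives, is the statement that the looped operation of a sum of operations is the sum of the loopings (primitivity of $\Omega r$) and that the looped operation of a product of two based operations is zero; the latter holds because $\Sigma$ is a map of non-unital rings into the reduced cohomology $E^*(S^1\wedge X,o)$ whose product is trivial by the suspension axiom — more precisely, $E^*(S^1\wedge X,o)$ has trivial multiplication as it is a suspension, so $\Sigma(r s)=\Sigma(r)\Sigma(s)=0$, forcing $\Omega(rs)$ to desuspend to $0$. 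This is the argument already implicit in the graded biring structure.

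The main obstacle will be axiom (2), the formula $\Delta^\times(\Omega r)=(-1)^{\deg_*(r_{[1]})}\sigma^{\deg_\bullet(r_{[1]})}r_{[1]}\otimes\Omega r_{[2]}$, since it mixes looping with co-multiplication and carries sign and antipode corrections coming from the graded-commutative Koszul rule and from the fact that $\Sigma X=S^1\wedge X$ introduces a degree-one class. I would unwind it via \cref{prop:coalg-from-action}: it suffices to check that for all $x,y$ in any $E^*(\rE_\bullet)$-algebra of the appropriate form, $(\Omega r)(xy)=(-1)^{\deg_*(r_{[1]})}\sigma^{\deg_\bullet(r_{[1]})}(r_{[1]})(x)\cdot(\Omega r_{[2]})(y)$. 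Applying the defining square and the Cartan-type formula $r(uv)=r_{[1]}(u)\,r_{[2]}(v)$ for $r$ to $u=\Sigma x$ (a degree-one suspension class) and $v=\Sigma(y\text{-part})$, the sign $(-1)^{\deg_*(r_{[1]})}$ records commuting $r_{[1]}$ past the degree-one suspension, while $\sigma^{\deg_\bullet(r_{[1]})}$ records that one of the two suspension coordinates on $S^1\wedge(-)$ must be pulled back along the fold/inverse map when distributing over a product; desuspending both sides then gives the stated identity. I expect the bookkeeping of exactly which suspension coordinate incurs $\sigma$ and where the Koszul sign appears to be the delicate point, but it is a finite check with no completion issues since everything reduces to the free modules $E_*(\rE_n)$ and their Künneth isomorphisms. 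Finally, the \textquotedblleft moreover\textquotedblright\ clause — that $\cE^*(X)$ is functorially an algebra with the compatible suspension structure — is inherited directly from \cref{thm:cohom-is-pleth} together with naturality of $\Sigma$.
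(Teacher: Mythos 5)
Your overall strategy is the same as the paper's: define $\Omega$ on $IE^*(\rE_\bullet)$ by the commutative square relating $r$ and $\Sigma$, note that axioms (1), (3), (4) of \cref{df:looping} are essentially formal (the paper simply cites \cite{BoardmanJ:1995} for (1)), and reduce axiom (2) to computing $(\Omega r)(xy)$ via \cref{prop:coalg-from-action}. However, your plan for axiom (2) — the one step you correctly identify as the crux — would fail as written. You propose to apply the Cartan formula to $u=\Sigma x$ and $v=\Sigma(\text{the $y$-part})$, i.e.\ to express $\Sigma(xy)$ as a product of two suspended classes. This cannot work: as you yourself observe when treating axiom (1), products of reduced classes on a suspension vanish, so $(\Sigma x)(\Sigma y)=0$ and that decomposition yields only $0=0$. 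The identity actually needed is
\[
\Sigma(xy)=(-1)^{|x|}\,(\pi_2^*x)\,(\Sigma y)\ \in\ E^*(S^1\times X,\,o\times X),
\]
where $\pi_2^*x$ is the \emph{unsuspended} pullback along $S^1\times X\to X$ and only the second factor is suspended. Feeding this into $r(uv)=r_{[1]}(u)r_{[2]}(v)$ gives $r_{[1]}\bigl(\pi_2^*((-1)^{|x|}x)\bigr)\,r_{[2]}(\Sigma y)$; the first factor equals $\pi_2^*\bigl((\sigma^{|x|}r_{[1]})(x)\bigr)$ by naturality, with the antipode absorbing the Koszul sign and $|x|=\deg_\bullet(r_{[1]})$, while the second is $\Sigma\bigl((\Omega r_{[2]})(y)\bigr)$ by definition of $\Omega$. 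Commuting $\pi_2^*(\cdots)$ back inside $\Sigma$ costs the remaining sign $(-1)^{\deg_*(r_{[1]})}$. This asymmetry — one tensor factor merely pulled back, the other suspended — is exactly what produces the asymmetric formula $\Delta^\times(\Omega r)=(-1)^{\deg_*(r_{[1]})}\sigma^{\deg_\bullet(r_{[1]})}r_{[1]}\otimes\Omega r_{[2]}$, with only the right-hand factor looped; your picture of ``two suspension coordinates'' with one pulled back along a fold/inverse map has no counterpart in the argument.

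Two smaller slips in your treatment of axiom (1): the assertion that ``$\Sigma$ is a map of non-unital rings'' is false (it would contradict the displayed identity above, and indeed the nontrivial $\Delta^\times$ you are trying to compute); the correct point is that $r(\Sigma\alpha)$ and $s(\Sigma\alpha)$ are both reduced classes on a suspension, so their product vanishes and hence $\Omega(rs)=0$. Likewise, primitivity of $\Omega r$ is not the statement $\Omega(r+s)=\Omega r+\Omega s$; it is additivity of the operation $\Omega r$ on elements, $(\Omega r)(\alpha+\beta)=(\Omega r)(\alpha)+(\Omega r)(\beta)$, which again rests on the vanishing of products of reduced classes on suspensions applied to the cross terms of $r(\Sigma\alpha+\Sigma\beta)$.
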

\begin{proof} Looping of operations is defined for based maps and so gives a map from $IE^*(\rE_\bullet)$ to $IE^*(\rE_\bullet)$, of bidegree $(-1,-1)$.
 It suffices to show that it satisfies properties (1) to (4) of \cref{df:looping}. For property (1),
 see~\cite[Corollary 2.18]{BoardmanJ:1995}. Properties (3) and (4) are immediate from the definition. For (2), let $x, y \in \cE^*(X)$ for some space $X$ and let $\pi_2 \colon S^1 \times X \to X$ denote the canonical projection.
To determine the comultiplication of a looped operation, we consider the action on products.
By definition we have
\begin{align*}
\Sigma (\Omega r)(xy) &= r(\Sigma (xy)) \\
&= r\left((-1)^{|x|} (\pi_2^*x) \Sigma y \right) \\
&= r_{[1]}\left(\pi_2^*((-1)^{|x|}x)\right) r_{[2]}(\Sigma y) \\
&= \pi_2^*\left((\sigma^{|x|} r_{[1]})(x)\right) \Sigma (\Omega r_{[2]})(y) \\
&= \Sigma \left( (-1)^{\deg_*(r_{[1]})}  (\sigma^{|x|} r_{[1]})(x) (\Omega r_{[2]})(y)  \right)
\end{align*}
and thus $$\Delta^\times (\Omega r) = (-1)^{\deg_*(r_{[1]})}  \sigma^{\deg_\bullet(r_{[1]})} r_{[1]} \otimes \Omega r_{[2]}.$$
\end{proof}

\begin{df}
An \emph{ideal} of a $k$-plethory with looping is an ideal $\mathcal{J}$ of a $k$-plethory such that $\Omega x \in \mathcal{J}$ for all $x \in \mathcal{J}$.
\end{df}

It is immediate that if $\mathcal{J} \subseteq P$ is an ideal of a $k$-plethory with looping then the canonical map $P \to P/\mathcal{J}$ is a map of $k$-plethories with looping.

In many settings, we obtain interesting collections of operations by considering loopings and composites of a small set of operations.

\begin{df}
Let $P$ be a complete Hausdorff $k$-plethory. We define the complete Hausdorff $k$-$k$-biring $P_\Omega$ to be the free $k$-algebra generated by the symbols $\Omega^0 x$ for $x \in P$ together with $\Omega^l x$ for $x \in IP$ and $l > 0$, quotiented by the ideal generated by the relations
\begin{align*}
	\Omega^0( x + y) &= \Omega^0(x) + \Omega^0(y) \\
	\Omega^0(xy) &= (\Omega^0 x)(\Omega^0 y) \\
	\Omega^0(\kappa) &= \kappa, \text{for $\kappa\in k$}\\
	\Omega^l(x + y) &= \Omega^l(x) + \Omega^l(y) \\
	\Omega^l(xy) &= \eps^+(x) \Omega^l(y) + \eps^+(y) \Omega^l(x).
\end{align*}
The bigrading is determined by $\deg_*(\Omega^k x) = \deg_*(x) - k$ and $\deg_\bullet(\Omega^k x) = \deg_\bullet(x) - k$. The identification $x \mapsto \Omega^0 x \in P_\Omega$ yields a canonical $k$-algebra map $P \hookrightarrow P_\Omega$. The biring structure on $P_\Omega$ is given by defining the elements $\Omega^k x$ to be primitive for $k > 0$, the canonical map $P \to P_\Omega$ to be a monomorphism of $k$-$k$-birings together with the following formulae for $k > 0$.
\begin{align*}
	\Delta^\times(\Omega^k x) &= (-1)^{k \deg_*(x_{[1]})} \sigma^{k\deg_\bullet(x_{[1]})} x_{[1]} \otimes \Omega^k x_{[2]} \\
	\eps^\times(\Omega^k x) &= (-1)^{k \deg_*(x_{[1]})} \eps^\times \left( \sigma^{k \deg_\bullet(x_{[1]})} x_{[1]}\right)  \\
	\beta\lambda(\Omega^k x) &= (\beta \lambda)(x_{[1]}) \eps^\times(\Omega^k x_{[2]})
\end{align*}
We define $\Omega P$, the \emph{free $k$-plethory with looping on $P$} to be the complete Hausdorff $k$-plethory $T_{\codot}(P_\Omega)$ quotiented by the relations
\begin{align*}
\Omega^k x \circ \Omega^k y &= \Omega^k(x \circ y) \\
\Omega^k \iota_n &= \iota_{n-k}.
\end{align*}
The looping in $\Omega P$ is given by $\Omega(\Omega^k x) = \Omega^{k+1} x$ and a map $f \colon P \to P'$ of complete Hausdorff $k$-plethories induces a map of $k$-plethories with looping $\Omega P \to \Omega P'$ by $f(\Omega^k x) = \Omega^k f(x)$. This construction defines a functor $\Omega: \CPleth_k\to \LoopPleth_k$.
\end{df}

\begin{prop}
	\label{prop:looping-adjunction}
	The forgetful functor $U \colon \LoopPleth_k \to \CPleth_k$ is right adjoint to $\Omega$.
\end{prop}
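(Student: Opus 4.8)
The plan is to establish the adjunction $\Omega \dashv U$ by exhibiting a natural bijection
\[
\LoopPleth_k(\Omega P, Q) \isom \CPleth_k(P, UQ)
\]
for a complete Hausdorff $k$-plethory $P$ and a $k$-plethory with looping $Q$. The unit of the adjunction is the canonical $k$-plethory map $\eta_P \colon P \to U\Omega P$ given by $x \mapsto \Omega^0 x$, and the counit $\varepsilon_Q \colon \Omega UQ \to Q$ is the map of $k$-plethories with looping sending $\Omega^k x$ to $\Omega^k_Q x$ (the $k$-fold iterate of the looping operation of $Q$, which makes sense since $\Omega_Q$ maps $IQ$ to $IQ$). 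The correspondence sends a looping-preserving plethory map $g \colon \Omega P \to Q$ to $Ug \circ \eta_P$, and conversely sends a plethory map $f \colon P \to UQ$ to the map $\Omega P \to Q$ determined on generators by $\Omega^k x \mapsto \Omega^k_Q(f(x))$.

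First I would check that these assignments are well-defined. For the map out of $\Omega P$, one must verify that $\Omega^k x \mapsto \Omega^k_Q(f(x))$ respects all the defining relations of $P_\Omega$ (the additivity and Leibniz-type relations for $\Omega^l$, $l>0$, and the algebra relations for $\Omega^0$), the biring structure formulas for $\Delta^\times$, $\eps^\times$ and $\beta\lambda$ on the $\Omega^k x$, and finally the plethory relations $\Omega^k x \circ \Omega^k y = \Omega^k(x\circ y)$ and $\Omega^k \iota_n = \iota_{n-k}$. Each of these holds because $\Omega_Q$ satisfies exactly the corresponding properties (1)--(4) of Definition~\ref{df:looping}: property (1) gives the Leibniz relations and primitivity, property (2) gives the $\Delta^\times$ formula (and, after applying $\eps^\times$ and $\beta\lambda$, the remaining two), property (3) gives compatibility with $\circ$, and property (4) gives $\Omega^k \iota_n = \iota_{n-k}$; meanwhile $f$ itself is already a biring and composition map, so $\Omega^0 x \mapsto f(x)$ is fine. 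By the universal property of $T_{\codot}(-)$ as the free plethory on a biring, a biring map out of $P_\Omega$ extends uniquely to a plethory map out of $T_{\codot}(P_\Omega)$, and it descends to the quotient $\Omega P$ precisely because the plethory relations are respected; the resulting map commutes with looping by construction since $\Omega_{\Omega P}(\Omega^k x) = \Omega^{k+1}x \mapsto \Omega^{k+1}_Q(f(x)) = \Omega_Q(\Omega^k_Q f(x))$.

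Next I would verify that the two assignments are mutually inverse and natural. Starting from $f \colon P \to UQ$, forming $g \colon \Omega P \to Q$ and then composing with $\eta_P$ recovers $x \mapsto g(\Omega^0 x) = \Omega^0_Q(f(x)) = f(x)$. Conversely, starting from a looping-preserving plethory map $g \colon \Omega P \to Q$, the associated $f = Ug\circ\eta_P$ satisfies $f(x) = g(\Omega^0 x)$, and the map reconstructed from $f$ sends $\Omega^k x \mapsto \Omega^k_Q(g(\Omega^0 x)) = g(\Omega^k(\Omega^0 x)) = g(\Omega^k x)$, using that $g$ commutes with $\Omega$; since $\Omega P$ is generated as a plethory by the $\Omega^k x$, this agrees with $g$. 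Naturality in both variables is routine: in $P$ it follows from functoriality of $\Omega$ and the fact that $\eta$ is a natural transformation $\mathrm{id} \Rightarrow U\Omega$; in $Q$ it follows from functoriality of $U$ and the observation that a map of $k$-plethories with looping commutes with the $\Omega^k_Q$.

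The main obstacle I anticipate is the well-definedness verification in the first step: one must carefully check that every structural relation defining $P_\Omega$ and then $\Omega P$ --- in particular the somewhat intricate formulas for $\Delta^\times(\Omega^k x)$, $\eps^\times(\Omega^k x)$ and $\beta\lambda(\Omega^k x)$ involving the signs $(-1)^{k\deg_*(x_{[1]})}$ and the co-inverse powers $\sigma^{k\deg_\bullet(x_{[1]})}$ --- is forced by, and consistent with, the corresponding identity satisfied by $\Omega_Q$. Since $P_\Omega$ and $\Omega P$ were \emph{designed} so that these match the axioms of Definition~\ref{df:looping} (and the latter are exactly what is verified for cohomology-theoretic examples in the preceding theorem), each check is mechanical; the only subtlety is bookkeeping the iterated-looping formulas, e.g.\ confirming that iterating property (2) of Definition~\ref{df:looping} $k$ times produces precisely the stated $\Delta^\times(\Omega^k x)$, using that $\sigma$ and $\eps^\times$ are biring maps and hence commute appropriately with the coproduct. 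Continuity of all maps is automatic from continuity of the structure maps of $Q$ and the fact that $\Omega P$ carries the quotient of a completed tensor-power topology. Granting these verifications, the adjunction follows.
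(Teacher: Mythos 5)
Your proposal is correct and follows essentially the same route as the paper: a plethory map $f \colon P \to UQ$ extends to a looping-preserving map $\Omega P \to Q$ via $\Omega^k x \mapsto \Omega^k_Q(f(x))$, and conversely one restricts along the canonical inclusion $P \to \Omega P$. The paper states only these two assignments and omits the well-definedness and mutual-inverse verifications, which you supply in more detail.
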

\begin{proof} A map of complete Hausdorff $k$-plethories $f \colon P \to UP'$ defines a map of $k$-plethories with looping $\hat f \colon \Omega P \to P'$ by $\hat f(\Omega^k x) = \Omega^k(f(x))$. Conversely, a map of $k$-plethories with looping $\Omega P \to P'$ restricts to a map of complete Hausdorff $k$-$k$-plethories $P \to UP'$ via the canonical inclusion $P \to \Omega P$. \end{proof}

\section{$K$-theory operations as a free plethory with looping}
\label{sec:mainresult}

We briefly study the $K$-theory operations of odd source degree. Since complex $K$-theory is represented in odd degrees by the infinite unitary group $U$, this is tantamount to understanding the Hopf algebra $K^*(U)$. We then relate these results to the $\lambda$-operations and show that in a suitable context, the $\lambda$-operations generate all $K$-theory operations.

Write $\Lambda^k \colon U(n) \to U{n \choose k} \subseteq U$ for the exterior power representation of the unitary group and let $\mu^k_n \in K^{-1}(U(n))$ denote the class represented by $\Lambda^k$.

\begin{thm}[{\cite[Theorem 2.7.17]{Atiyah:1967}}]
	\label{thm:k-theory-unitary-group}
	We have an isomorphism of $K^*$-algebras
	$$ K^*(U(n)) \isom \Lambda_{K^*}[\mu^1_n, \dots, \mu^n_n].$$
	Moreover, if $i \colon U(n-1) \to U(n)$ denotes the standard inclusion map then $ i^*(\mu_n^k) = \mu_{n-1}^k + \mu_{n-1}^{k-1}.$
\end{thm}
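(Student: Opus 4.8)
The plan is to prove this by induction on $n$, using the fibre bundle $U(n-1)\xrightarrow{\,i\,}U(n)\xrightarrow{\,p\,}S^{2n-1}$, where $p$ sends a unitary matrix to its last column and $i$ is the standard block inclusion, together with the Hopf algebra structure on $K^*(U(n))$ coming from matrix multiplication. The base case is $n=1$, where $U(1)=S^1$ and $K^*(S^1)=\Lambda_{K^*}[\mu^1_1]$ by direct computation: the class $\mu^1_1=[\,S^1=U(1)\hookrightarrow U\,]\in K^{-1}(S^1)$ generates $\widetilde K^{-1}(S^1)\cong K^*$ and squares to zero because $\widetilde K^*(S^1)$ is a square-zero ideal.

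First I would establish the restriction formula $i^*(\mu^k_n)=\mu^k_{n-1}+\mu^{k-1}_{n-1}$ (with the conventions $\mu^0_m=0$ and $\mu^j_m=0$ for $j>m$). Restricting along $U(n-1)\times U(1)\hookrightarrow U(n)$ and decomposing $\Lambda^k(\mathbb{C}^{n-1}\oplus\mathbb{C})\cong\Lambda^k\mathbb{C}^{n-1}\oplus\Lambda^{k-1}\mathbb{C}^{n-1}$ as $U(n-1)$-representations exhibits $\Lambda^k\circ i$ as a block (Whitney) sum of $\Lambda^k_{n-1}$ and $\Lambda^{k-1}_{n-1}$. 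Since the universal class in $K^{-1}(U)=[U,U]$ pulls back additively along block sums — because $[V\oplus W]-\mathrm{rk}=([V]-\mathrm{rk})+([W]-\mathrm{rk})$ in reduced $K^0$ of classifying spaces, looped down — the formula follows. Telescoping it shows that $i^*(\mu^1_n),\dots,i^*(\mu^{n-1}_n)$ is a triangular change of basis of the inductively-assumed generators $\mu^1_{n-1},\dots,\mu^{n-1}_{n-1}$ of $K^*(U(n-1))$, so $i^*\colon K^*(U(n))\to K^*(U(n-1))$ is surjective.

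Next I would run the Wang exact sequence of the fibration over $S^{2n-1}$. Surjectivity of $i^*$ forces the relevant connecting map to vanish, so the sequence breaks into short exact sequences \[ 0\to K^*(U(n-1))\xrightarrow{\partial}K^{*+1}(U(n))\xrightarrow{i^*}K^{*+1}(U(n-1))\to 0. \] As $\partial$ is a map of $K^*(U(n))$-modules, the element $\xi:=\partial(1)$ satisfies $\ker i^*=(\xi)$ and $\xi^2=0$, and $K^*(U(n))\cong K^*(U(n-1))\oplus K^*(U(n-1))\xi$ as $K^*$-modules; in particular $K^*(U(n))$ is free over $K^*$ of rank $2^n$, hence torsion-free. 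Since $\mu^k_n\in K^{-1}$, graded commutativity gives $2(\mu^k_n)^2=0$ and $2(\mu^k_n\mu^l_n+\mu^l_n\mu^k_n)=0$, so torsion-freeness forces the exterior relations and yields a surjection $\Lambda_{K^*}[\mu^1_n,\dots,\mu^n_n]\twoheadrightarrow K^*(U(n))$ of free $K^*$-modules; it remains to see that it is injective, i.e. that the $\mu^k_n$ generate $K^*(U(n))$.

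This last point is where the real work lies. Let $R\subseteq K^*(U(n))$ be the subalgebra generated by the $\mu^k_n$; then $R+(\xi)=K^*(U(n))$ by the surjectivity of $i^*$, and since $\xi^2=0$ it suffices to show that $\eta:=\sum_{k=1}^n(-1)^{n-k}\mu^k_n\in R$ — which lies in $\ker i^*=(\xi)$, by telescoping the restriction formula — generates the ideal $(\xi)$. Writing $\mu^k_n$ as the looping of the class $\lambda^k V_n-\binom{n}{k}$ on $BU(n)$, additivity of the looping map gives $\eta=\pm\,\Omega(\lambda_{-1}(V_n))$ with $\lambda_{-1}(V_n)=\prod_i(1-x_i)$ for the Chern roots $x_i$, and $\mathrm{ch}(\lambda_{-1}(V_n))$ has leading term $\pm c_n$; tracking this through the Atiyah--Hirzebruch spectral sequence (or via compatibility of $\mathrm{ch}$ with the cohomology suspension, which kills decomposables and sends $c_n$ to an integral generator of $H^{2n-1}(U(n);\mathbb{Z})$) shows that $\eta$ hits a module generator of $\ker i^*$ with a unit coefficient, whence $\xi\in R$ and $K^*(U(n))=R$. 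I expect the genuinely delicate step to be exactly this integrality check — that the comparison coefficient between $\eta$ and a generator of $\ker i^*$ is a unit of $K^*$, not merely nonzero — which is where Bott's computation of the exterior-power maps on $\pi_{2n-1}(U(n))$ really enters; alternatively one can simply invoke \cite{Atiyah:1967}, where the full argument is carried out.
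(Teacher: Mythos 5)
The paper does not prove this statement at all: it is quoted directly from Atiyah's lectures (Theorem 2.7.17 there), so there is no internal proof to compare against. Your reconstruction is essentially the classical argument that Atiyah gives --- induction along the fibration $U(n-1)\to U(n)\to S^{2n-1}$, the restriction formula from $\Lambda^k(\C^{n-1}\oplus\C)\cong\Lambda^k\C^{n-1}\oplus\Lambda^{k-1}\C^{n-1}$ together with additivity of $[X,U]$ under block sum, the Wang sequence splitting off a free rank-one summand on $\xi=\partial(1)$ with $\xi^2=0$, and the identification of $\sum_k(-1)^{n-k}\mu^k_n$ as a generator of $\ker i^*$ --- and it is sound. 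Two remarks. First, one sentence has surjectivity and injectivity interchanged: the exterior relations only give a \emph{homomorphism} $\Lambda_{K^*}[\mu^1_n,\dots,\mu^n_n]\to K^*(U(n))$, and what remains to be shown is its \emph{surjectivity} (i.e.\ that the $\mu^k_n$ generate); injectivity is then automatic because a surjection between free $K^*$-modules of the same finite rank $2^n$ is an isomorphism. Your final paragraph does in fact prove generation, so only the wording is off. Second, you correctly isolate the genuine content in the unit-coefficient check for $\eta$ against $\xi$: this is precisely Bott's integrality theorem (the statement that $\lambda_{-1}(V_n)$ restricted to $U(n)/U(n-1)\simeq S^{2n-1}$ transgresses to a generator, which ultimately rests on the periodicity theorem), and deferring it to \cite{Atiyah:1967} is reasonable given that the paper cites the entire theorem there.
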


We remark that the choice of degree for the elements $\mu^k_n \in K^*(U(n))$ is arbitrary and we could choose any odd degree. Our selection is motivated by a relation to the even degree operations: the looping of the $\lambda$-operations will be expressible in terms of the $\mu^k_n$ and we chose the $\lambda$-operations to lie in cohomological degree zero.

To understand the relationship between the $\mu_n^k$ and our choice of generators of $K(BU)$ it proves fruitful to understand the representing maps of the $\lambda$-operations. By a classical result of Anderson \cite{Anderson:1983}, there are no phantom operations in $K$-theory and thus $K(BU) \isom \varprojlim\limits_n K(BU(n))$. Let $\beta^k_n \in K(BU(n), o)$ be represented by
\[
B \Lambda^k \colon BU(n) \to BU \simeq \{0\} \times BU \subseteq \Z\times BU.
\]

\begin{prop}
	\label{prop:rep-lambda-ops}
	Define $\lambda^k_n = \sum_{i=0}^{k} {-n \choose i}\beta^{k-i}_n \in K(BU(n), o)$. The following hold.
	\begin{enumerate}
		\item For $j=Bi \colon BU(n) \to BU(n+1)$, we have $j^* \lambda^k_{n+1} = \lambda^k_n$.
		\item The element $\lambda^k \iota \in K(BU, o) \isom \varprojlim\limits_n K(BU(n), o)$ corresponds to the inverse limit of the $\lambda^k_n \in K(BU(n), o) $.
	\end{enumerate}
\end{prop}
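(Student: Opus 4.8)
The plan is to verify both statements by reducing to standard facts about $\lambda$-rings and the known computation of $K^*$ of unitary groups and their classifying spaces. The key point is that $\lambda^k_n$ has been defined precisely so that its relation to $\beta^k_n$ inverts in a way that is compatible with the inclusions $BU(n) \to BU(n+1)$.

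First I would establish (1). The inclusion $j = Bi \colon BU(n) \to BU(n+1)$ classifies the direct sum of the canonical bundle with a trivial line, so $j^*$ sends the class of $\Lambda^k$ on $BU(n+1)$ to the class represented by the $k$-th exterior power of (canonical $\oplus$ trivial line) on $BU(n)$. Since $\Lambda^k(V \oplus L) = \Lambda^k(V) \oplus \Lambda^{k-1}(V)$ when $L$ is a line, and since we are working in reduced $K$-theory (so a trivial bundle contributes only through the shift in rank), one obtains a formula for $j^*\beta^k_{n+1}$ in terms of the $\beta^i_n$ together with binomial correction terms coming from the change in the rank normalisation. I would compute this correction explicitly: writing $b^k_n$ for the unreduced exterior power class, one has $j^* b^k_{n+1} = b^k_n + b^{k-1}_n$, and passing to reduced classes $\beta^k_n = b^k_n - \binom{n}{k}$ introduces the binomial factors. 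The definition $\lambda^k_n = \sum_{i=0}^k \binom{-n}{i} \beta^{k-i}_n$ is exactly the Möbius-type inversion that absorbs these corrections; substituting the formula for $j^*\beta^{k-i}_{n+1}$ and using the Vandermonde-type identity $\sum_i \binom{-(n+1)}{i}(\text{something}) = \sum_i \binom{-n}{i}(\cdots)$ collapses the sum to $\lambda^k_n$. The combinatorial identity $\binom{-n-1}{i} = \binom{-n}{i} + \binom{-n-1}{i-1}$ (equivalently $\binom{-n}{i} = \sum_{j}\binom{-n-1}{i-j}\cdots$) is the engine here; this is the step I expect to require the most care, though it is ultimately routine.

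Then for (2): since Anderson's theorem gives $K(BU) \cong \varprojlim_n K(BU(n))$ and correspondingly $K(BU,o) \cong \varprojlim_n K(BU(n),o)$, part (1) shows the elements $\lambda^k_n$ are compatible under the restriction maps and hence assemble to a well-defined element of the inverse limit. To identify this element with $\lambda^k\iota$, I would restrict to a maximal torus, or equivalently compare with the universal formula: under the splitting principle the canonical bundle over $BU(n)$ becomes a sum of $n$ line bundles $\ell_1, \dots, \ell_n$, the $\lambda$-operation $\lambda^k$ applied to $\iota$ is by definition the $k$-th elementary symmetric function in the classes $(\ell_i - 1)$, and $\Lambda^k$ of the canonical bundle is the $k$-th elementary symmetric function in the $\ell_i$ themselves. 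The binomial coefficients $\binom{-n}{i}$ are exactly the coefficients relating these two, since $\prod_i(1 + (\ell_i-1)t) = \prod_i \ell_i \cdot \prod_i(1 + (\ell_i^{-1}-1)\cdots)$ — more directly, $e_k(\ell_i - 1) = \sum_{i} \binom{n-k+i}{i}\cdots$, and after rewriting with $\beta^j_n = e_j(\ell) - \binom{n}{j}$ one recovers precisely the stated linear combination. Thus $\lambda^k_n$ is the image of $\lambda^k\iota$ under $K(BU,o) \to K(BU(n),o)$, which is what (2) asserts.

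The main obstacle is bookkeeping: getting the binomial-coefficient identities exactly right, in particular the sign conventions in $\binom{-n}{i}$ and the interaction between the rank-normalisation that defines reduced classes and the exterior-power recursion $\Lambda^k(V\oplus L) = \Lambda^k V \oplus \Lambda^{k-1}V$. I would organise the computation by first doing everything for unreduced classes $b^k_n$ (where the formulas are clean), then applying the reduction $\beta^k_n = b^k_n - \binom{n}{k}$ uniformly at the end. No genuinely new input beyond Theorem~\ref{thm:k-theory-unitary-group}, Anderson's no-phantoms result, the splitting principle, and the definition of the $\lambda$-operations is needed.
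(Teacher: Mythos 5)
Your overall strategy is the one the paper follows: part (1) reduces to the restriction formula $j^*\beta^k_{n+1} = \beta^k_n + \beta^{k-1}_n$ together with Pascal's rule for negative binomial coefficients, and part (2) reduces to identifying the restriction of $\lambda^k\iota$ to $BU(n)$ with $\lambda^k([\xi_n]-n)$, where $\xi_n$ is the universal $n$-plane bundle. (The paper does the identification by evaluating the representing map $\Z\times\lambda^k_n$ on an arbitrary class $[\xi]-n$ over a compact space rather than by restricting to a maximal torus, but the underlying $\lambda$-ring computation is the same.) However, two of the identities you lean on are mis-stated, and one of them would derail the calculation if used literally.

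First, the ``engine'' for (1) should be Pascal's rule in the form $\binom{-n}{i} = \binom{-n-1}{i} + \binom{-n-1}{i-1}$, not $\binom{-n-1}{i} = \binom{-n}{i}+\binom{-n-1}{i-1}$ as you wrote. After substituting $j^*\beta^{k-i}_{n+1} = \beta^{k-i}_n + \beta^{k-i-1}_n$ into $j^*\lambda^k_{n+1} = \sum_i\binom{-(n+1)}{i}\,j^*\beta^{k-i}_{n+1}$, the coefficient of $\beta^{k-i}_n$ is $\binom{-n-1}{i}+\binom{-n-1}{i-1}$, and it is this that must equal $\binom{-n}{i}$; with the identity as you stated it the sum does not collapse. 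This is a sign slip rather than a conceptual problem. Second, and more seriously, your identification in (2) rests on the claim that $\lambda^k\iota$ is ``by definition the $k$-th elementary symmetric function in the classes $\ell_i - 1$''. This is false: $\lambda^k$ of a sum equals the elementary symmetric function of the summands only when each summand is a line element, and $\ell_i-1$ is not one. For instance, with $n=1$ one has $e_2(\ell_1-1)=0$ while $\lambda^2(\ell_1-1)=1-\ell_1\neq 0$. The correct input is multiplicativity of $\lambda_t$ together with $\lambda_t(1)=1+t$, giving $\lambda_t([\xi]-n)=\lambda_t([\xi])(1+t)^{-n}$ and hence $\lambda^k([\xi]-n)=\sum_{i=0}^{k}\binom{-n}{i}\Lambda^{k-i}[\xi]$; combined with the Vandermonde cancellation $\sum_{i=0}^{k}\binom{-n}{i}\binom{n}{k-i}=0$ for $k\geq 1$, this is exactly $\sum_{i=0}^{k}\binom{-n}{i}\beta^{k-i}_n=\lambda^k_n$. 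With these two corrections your argument goes through and coincides with the paper's.
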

\begin{proof} The first result follows immediately since $j^* \beta^k_{n+1} = \beta^k_{n} + \beta^{k-1}_{n}$. For the second, let $X$ be a compact Hausdorff space, so the representing
map for $x\in K(X, o)$ factors via $\Z \times BU(n)$ for some $n$.
Let $x = [\xi] - n \in K(X, o)$.  Now the composition
$$ X \xrightarrow{x} \Z \times BU(n) \xrightarrow{\Z \times \lambda^k_n} \Z \times BU  $$
represents the virtual bundle 
$$\sum_{i=0}^{k} {-n \choose i}\left[\Lambda^{k-i}[\xi] -  {n \choose k-i}\right] = \sum_{i=0}^{k} {-n \choose i} \Lambda^{k-i}[\xi] = (\lambda^k \iota)(x).$$ \end{proof}

This linear combination of generators allows us to compute $K^*(U) = \varprojlim K^*(U(n))$ in a form closely related to our description of $K(BU)$.
\begin{prop}\label{prop:odd-gens}
	Let $l_n^k = \sum \limits_{i=0}^{k-1} {-n \choose i} \mu^{k-i}_n \in K^{-1}(U(n))$ for $k \leq n$.
	\begin{enumerate}
		\item If $i \colon U(n-1) \to U(n)$ is the inclusion map as above then $ i^*(l_n^k) = l_{n-1}^k. $
		\item We have an isomorphism of $K^*$-algebras $K^*(U(n)) \isom \Lambda_{K^*}[l^1_n, \dots, l^n_n].$
		\item We have an isomorphism of $K^*$-algebras $K^*(U) \isom \Lambda_{K^*}[l^1, l^2, \dots]$ where if $\iota \colon U(n) \to U$ denotes the inclusion then $\iota^*l^k = l^k_n$.
	\end{enumerate}
\end{prop}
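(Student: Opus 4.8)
The plan is to deduce all three parts from Theorem~\ref{thm:k-theory-unitary-group} by a change-of-basis argument that is formally identical to the one used for $K(BU(n))$ in Proposition~\ref{prop:rep-lambda-ops}. The key observation is that the transformation matrix sending $(\mu^1_n,\dots,\mu^n_n)$ to $(l^1_n,\dots,l^n_n)$ is unitriangular: $l^k_n = \mu^k_n + \sum_{i=1}^{k-1}\binom{-n}{i}\mu^{k-i}_n$, so $l^k_n$ equals $\mu^k_n$ plus a $K^*$-linear combination of $\mu^j_n$ with $j<k$. Hence $(l^1_n,\dots,l^n_n)$ is again a set of exterior-algebra generators, which gives part~(2) immediately from Theorem~\ref{thm:k-theory-unitary-group}.

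For part~(1), I would compute $i^*(l^k_n)$ using the formula $i^*(\mu^k_n) = \mu^{k}_{n-1} + \mu^{k-1}_{n-1}$ from Theorem~\ref{thm:k-theory-unitary-group}, so that
\begin{align*}
i^*(l^k_n) &= \sum_{i=0}^{k-1}\binom{-n}{i}\left(\mu^{k-i}_{n-1} + \mu^{k-i-1}_{n-1}\right)
= \sum_{j=0}^{k-1}\left[\binom{-n}{j} + \binom{-n}{j-1}\right]\mu^{k-j}_{n-1},
\end{align*}
where we set $\mu^0_{n-1}$ to correspond to the coefficient bookkeeping and use the convention $\binom{-n}{-1}=0$. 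Applying the Pascal identity $\binom{-n}{j}+\binom{-n}{j-1} = \binom{-n+1}{j} = \binom{-(n-1)}{j}$ collapses this sum to $\sum_{j=0}^{k-1}\binom{-(n-1)}{j}\mu^{k-j}_{n-1} = l^k_{n-1}$, as required. This is the same combinatorial mechanism that makes the $\lambda^k_n$ compatible under the inclusions $BU(n)\to BU(n+1)$; indeed the $l^k_n$ are designed precisely so that this telescoping works, mirroring Proposition~\ref{prop:rep-lambda-ops}(1).

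For part~(3), since there are no phantom elements in $K$-theory by Anderson's theorem (as already invoked for $BU$), we have $K^*(U)\isom\varprojlim_n K^*(U(n))$. Part~(1) shows the elements $l^k_n$ for fixed $k$ assemble into a compatible system, hence define $l^k\in K^*(U)$ with $\iota^*l^k = l^k_n$; part~(2) identifies each term of the inverse system as a finitely generated exterior algebra with the transition maps killing the top generator, so the limit is the exterior algebra $\Lambda_{K^*}[l^1,l^2,\dots]$ on countably many odd-degree generators. The main obstacle, such as it is, is purely bookkeeping: getting the index ranges and the $\binom{-n}{-1}=0$ edge cases right in the Pascal-identity step, and being careful that the change of basis is genuinely unitriangular over $K^*$ (the leading coefficient $\binom{-n}{0}=1$ is a unit), so that no completeness or invertibility subtlety intrudes. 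None of this requires new ideas beyond what was used for the even-degree computation.
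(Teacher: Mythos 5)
Your argument is correct and is essentially the computation the paper leaves implicit (its proof of this proposition is the single sentence ``This follows directly from Theorem~\ref{thm:k-theory-unitary-group}''): the unitriangular change of basis for part~(2), the Pascal identity $\binom{-n}{j}+\binom{-n}{j-1}=\binom{-(n-1)}{j}$ for part~(1), and passage to the inverse limit for part~(3). One small quibble: the transition maps do not literally ``kill the top generator,'' since $i^*(l^n_n)$ is a nonzero combination of $l^1_{n-1},\dots,l^{n-1}_{n-1}$ (equal to $l^n_{n-1}$ under the convention $\mu^n_{n-1}=0$), but this does not affect the identification of the limit because the transition maps remain surjective and are the identity on the generators $l^1,\dots,l^{n-1}$.
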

\begin{proof} 
This follows directly from \cref{thm:k-theory-unitary-group}.
\end{proof}

The following result is now an immediate consequence and the motivation for the definition of the odd degree operations $l^k$.

\begin{cor}
	\label{thm:loop-lambda}
	The composition
	$$ \Set(\Z, \Z) \cotimes K(BU) \xrightarrow{\theta} K(\Z \times BU) = K^0(\Z \times BU) \xrightarrow{\Omega} K^{-1}(U), $$
	is determined by $f \otimes \lambda^k \iota \mapsto f(0) l^k$, for $f\in\Set(\Z,\Z)$.
\end{cor}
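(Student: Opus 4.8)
The plan is to trace the class $f \otimes \lambda^k \iota$ through the composite and identify where the looping operator $\Omega$ intervenes. Under $\theta$ this element corresponds to $\pi_1^* f \cdot \pi_2^*(\lambda^k\iota) \in K^0(\Z \times BU)$, so it suffices to compute the looping of a class of the form $\pi_1^* f \cdot \pi_2^*(\lambda^k\iota)$. First I would note that $\Omega$ is defined on the augmentation ideal and vanishes on decomposables by property (1) of Definition~\ref{df:looping}; since $\pi_1^* f$ restricted to the basepoint component is just the constant $f(0)$, we may write $\pi_1^* f = f(0)\cdot 1 + (\pi_1^* f - f(0))$ where the second summand lies in $I\Set(\Z,\Z)$. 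Hence modulo decomposables $\pi_1^* f \cdot \pi_2^*(\lambda^k\iota) \equiv f(0)\, \pi_2^*(\lambda^k\iota)$, and by linearity and the vanishing on $(IP)^2$ we get $\Omega(\pi_1^* f \cdot \pi_2^*(\lambda^k\iota)) = f(0)\,\Omega(\pi_2^*(\lambda^k\iota))$. So the whole problem reduces to computing $\Omega(\lambda^k\iota)$, i.e.\ the looping of the $\lambda$-operation, and showing it equals $l^k \in K^{-1}(U)$.

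Next I would compute $\Omega(\lambda^k\iota)$ directly from the defining commutative diagram for looped operations. By Proposition~\ref{prop:rep-lambda-ops}, $\lambda^k\iota$ is represented on $BU(n)$ by the virtual bundle $\sum_{i=0}^k \binom{-n}{i} \beta_n^{k-i}$, where $\beta_n^j = B\Lambda^j$. Looping corresponds to restricting along the canonical map $S^1 \wedge BU(n)_+ \to \Sigma(\text{something})$; more concretely, since $K^{-1}(U(n)) = \widetilde{K}^0(\Sigma U(n)_+)$ and $U(n)$ is (essentially) the loop space of $BU(n)$, the looped operation is obtained by applying the bar/suspension construction to the representing map $B\Lambda^k$, yielding $\Lambda^k \colon U(n) \to U$ and hence $\mu_n^j$ in place of $\beta_n^j$. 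Thus $\Omega$ takes $\sum_{i=0}^k \binom{-n}{i}\beta_n^{k-i}$ to $\sum_{i=0}^k \binom{-n}{i}\mu_n^{k-i}$; but $\mu_n^0 = B\Lambda^0$ loops to the unit, which is killed in the reduced theory $K^{-1}(U(n))$, leaving precisely $\sum_{i=0}^{k-1}\binom{-n}{i}\mu_n^{k-i} = l_n^k$ in the notation of Proposition~\ref{prop:odd-gens}. By compatibility under the inclusions $U(n-1)\to U(n)$ (Proposition~\ref{prop:odd-gens}(1)) and the absence of phantom operations (Anderson), these assemble to $l^k \in K^{-1}(U) = \varprojlim_n K^{-1}(U(n))$.

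Putting the two reductions together gives $\Omega(\theta(f\otimes\lambda^k\iota)) = f(0)\, l^k$, as required.

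The main obstacle I anticipate is making the second step — the claim that looping the representing map $B\Lambda^k$ of $\lambda^k\iota$ yields the map $\Lambda^k$ representing $l^k$ — genuinely rigorous rather than heuristic. One has to be careful about the precise models: $\Omega$ is defined via the suspension isomorphism $\Sigma \colon E^{n-1}(X) \to E^n(S^1\times X, o\times X)$ and the commuting square, so concretely one should evaluate the looped operation on the universal class $\iota_{-1} \in K^{-1}(U)$, i.e.\ compute $\Omega(\lambda^k\iota)$ as the image of $\lambda^k\iota$ under $(\lambda^k\iota)^* $ composed with the suspension, and match it against the class represented by $\Lambda^k$ using Theorem~\ref{thm:k-theory-unitary-group}. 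The bookkeeping of which terms survive in the reduced theory (the $\mu_n^0$ term) and the binomial coefficient normalization from Proposition~\ref{prop:rep-lambda-ops} is the delicate part; the rest is formal manipulation using the properties of $\Omega$ already established.
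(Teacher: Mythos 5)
Your proof is correct and follows essentially the same route as the paper: reduce to computing $\Omega(\lambda^k\iota)$ (the paper does this via the topological identification $\Omega(\Z\times BU)=\Omega(\{0\}\times BU)$, so that the operation restricts to $f(0)\lambda^k\iota$ on the basepoint component, rather than via property (1) of Definition~\ref{df:looping} as you do, but the effect is identical), and then identify $\Omega(\lambda^k\iota)$ with $l^k$ through the representing maps $\sum_{i}\binom{-n}{i}B\Lambda^{k-i}$ and the equivalence $\Omega B\simeq 1$. Your explicit handling of the disappearing $i=k$ term (the $\Lambda^0$ summand is constant, hence trivial in reduced $K^{-1}(U(n))$, which accounts for $l^k_n$ being a sum over $i\le k-1$) is a bookkeeping detail the paper leaves implicit.
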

\begin{proof}
Since $\Omega(\Z \times BU) = \Omega(\{0\} \times BU)$, it suffices to consider the restriction of $\pi_1^* f \pi_2^*(\lambda^k \iota)$ to $\{0\} \times BU \simeq BU$ which is $f(0) \lambda^k \iota \in K(BU)$ and so $\Omega (f \otimes \lambda^k \iota) = f(0) \Omega(\lambda^k \iota)$. Now, by \cref{prop:rep-lambda-ops}, $\lambda^k \iota$ is represented by the inverse limit of the maps
\[
	\sum_{i=0}^{k} {-n \choose i} B\Lambda^{k-i} \colon BU(n) \to BU.
\]
 Since $\Omega B \simeq 1$, we see that $\Omega(\lambda^k \iota)$ is represented by the inverse limit of the maps
\[
	\sum_{i=0}^{k} {-n \choose i} \Lambda^{k-i} \colon U(n) \to U
\] and hence $\Omega(\lambda^k \iota) = l^k$.
\end{proof}

The remaining piece of structure to understand is the looping of the odd degree operations.

\begin{df}
\label{df:linear-poly}
Let $P_l \in \Z[x_1, \dots, x_l; y_1, \dots, y_l]$ denote the universal polynomial encoding the action of the $\lambda$-operation $\lambda^l$ on products in a $\lambda$-ring \cite[Definition 1.10]{Yau:2010}. We define the \emph{left-linearisation}, $P^L_l$, of $P_l$ to be the polynomial given by the sum of the monomials of $P_l$ containing a single $x_i$. Concretely, if we define $|x_i| = 1, |y_j| = 0$, for all $i, j$, then $P^L_l$ is the degree $1$ homogeneous part of $P_l$.
\end{df}

\begin{prop}
	\label{prop:loop-l}
	For $l^k \in K^{-1}(U)$, we have
	$$ \Omega l^k = 1 \otimes P^L_k(1, -1, \dots, (-1)^{k-1}; \lambda^1 \iota, \dots,  \lambda^k \iota)
	\quad \in K^{-2}(\Z \times BU). $$
\end{prop}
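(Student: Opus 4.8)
The plan is to identify $\Omega l^k$ with a cohomology operation on $K^{-2}(-)$ and compute it directly from the definition of looping. By \cref{thm:loop-lambda} we have $l^k=\Omega(\lambda^k\iota)$, where $\lambda^k$ denotes the $\lambda$-ring operation $x\mapsto\lambda^k(x)$ on degree-zero $K$-theory. Iterating the defining relation $\Sigma\circ\Omega r=r\circ\Sigma$ for looping gives $\Omega l^k=\Omega^2(\lambda^k\iota)=\Sigma^{-2}\circ\lambda^k\circ\Sigma^2$ as operations on $K^{-2}(-)$. Since $K$-theory is $\Z/2$-graded I identify $K^{-2}=K^0$ via Bott periodicity, and under this identification the double suspension $\Sigma^2\colon K^0(X)\to\widetilde K^0(\Sigma^2 X_+)$ becomes external multiplication by a fixed generator $\mathcal B$ of $\widetilde K^0(S^2)\cong\Z$; concretely I would work in the honest $\lambda$-ring $K^0(S^2\times X)$ and write $\Sigma^2 x=p_1^*\mathcal B\cdot p_2^*x$, with $p_1,p_2$ the projections and $\mathcal B=\pm([H]-1)$ for $H$ the tautological line bundle on $S^2=\mathbb{CP}^1$.

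The key computation is $\lambda^k(p_1^*\mathcal B\cdot p_2^*x)$, for which I need two elementary facts about $\mathcal B$. First, $\mathcal B^2=0$ (since $K^0(\mathbb{CP}^1)=\Z[[H]]/([H]-1)^2$), hence $(p_1^*\mathcal B)^m=0$ for $m\ge 2$. Second, since $H$ is a line bundle, $\lambda_t(\mathcal B)=\lambda_t([H]-1)=(1+[H]t)(1+t)^{-1}$, and using $([H]-1)^2=0$ one gets $\lambda^i(\mathcal B)=(-1)^{i-1}\mathcal B$ for $i\ge 1$. Feeding $\lambda^i(p_1^*\mathcal B)=(-1)^{i-1}p_1^*\mathcal B$ and $\lambda^i(p_2^*x)=p_2^*\lambda^i(x)$ into the universal product formula $\lambda^k(ab)=P_k(\lambda^1 a,\dots,\lambda^k a;\lambda^1 b,\dots,\lambda^k b)$, every monomial of $P_k$ of total $x$-degree at least $2$ contributes a factor $(p_1^*\mathcal B)^m$ with $m\ge 2$ and so vanishes, while the part of $x$-degree $0$ is $P_k(0,\dots,0;y_1,\dots,y_k)=\lambda^k(0)=0$ (as $k\ge 1$). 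Hence only the $x$-linear part $P^L_k$ survives; since $P^L_k$ is linear in the $x$-variables one factors out a single $p_1^*\mathcal B$, obtaining $\lambda^k(p_1^*\mathcal B\cdot p_2^*x)=p_1^*\mathcal B\cdot p_2^*\!\left(P^L_k(1,-1,\dots,(-1)^{k-1};\lambda^1 x,\dots,\lambda^k x)\right)$.

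Applying $\Sigma^{-2}$, that is, cancelling the factor $p_1^*\mathcal B$, yields $\Omega l^k(x)=P^L_k(1,-1,\dots,(-1)^{k-1};\lambda^1 x,\dots,\lambda^k x)$ for every $x$ in $K^0(X)=K^{-2}(X)$ and every $X$. This exhibits $\Omega l^k$ as the operation obtained by substituting the degree-zero $\lambda$-operations into $P^L_k$; reading off the corresponding representing class under the identification $\theta$ of \cref{isobirings} gives $\Omega l^k=1\otimes P^L_k(1,-1,\dots,(-1)^{k-1};\lambda^1\iota,\dots,\lambda^k\iota)$ in $K^{-2}(\Z\times BU)\cong\Set(\Z,\Z)\cotimes K(BU)$, as claimed. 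As a check, $P^L_k(1,-1,\dots,(-1)^{k-1};\lambda^1\iota,\dots,\lambda^k\iota)$ is $(-1)^{k-1}$ times the Newton polynomial in the $\lambda^j\iota$, i.e.\ $(-1)^{k-1}\psi^k$ for the Adams operation $\psi^k$.

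The step I expect to require the most care is precisely this bookkeeping: keeping the $\Z/2$-grading, the double suspension, and Bott periodicity consistently aligned so that "$\Sigma^2=p_1^*\mathcal B\cdot p_2^*(-)$" and its cancellation are justified, and correctly matching the resulting operation on $K^{-2}(-)$ with its representing class in $K^{-2}(\Z\times BU)=\Set(\Z,\Z)\cotimes K(BU)$. The remaining ingredients — the computation of $\lambda^i(\mathcal B)$ and the collapse of $P_k$ onto its $x$-linear part — are short and formal.
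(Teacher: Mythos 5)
Your proposal is correct and takes essentially the same route as the paper: both reduce to computing $\lambda^k(u_2\times\alpha)$ for the Bott class $u_2=[H]-1$ via the universal polynomial $P_k$, using $\lambda^i(u_2)=(-1)^{i-1}u_2$ and $u_2^2=0$ to collapse $P_k$ onto its left-linearisation $P^L_k$. The only cosmetic difference is that you phrase the double suspension as external multiplication by the Bott generator under periodicity, whereas the paper applies $\Sigma^2$ to both sides and cancels the suspension element directly; these are the same computation.
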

\begin{proof} By \cref{thm:loop-lambda}, $\Omega l^k = \Omega^2(\pi_1^*f \pi_2^*\lambda^k \iota)$ for any $f$ with $f(0) = 1$. Now let $\alpha \in K(X)$ and denote the degree $2$ suspension element by $u_2 = [\xi_1] - 1 \in K(S^2, o)$ where $\xi_1$ is the canonical line bundle over $S^2 \simeq \C P^1$. Then we have
\begin{align*}
(\Sigma^2 \Omega l^k)(\alpha) &= \Sigma^2 \Omega^2 (\pi_1^*f \pi_2^* \lambda^k \iota)(\alpha) \\
&= (\pi_1^*f \pi_2^* \lambda^k \iota)(u_2 \times \alpha) \\
&= f(\eps(u_2)\eps(\alpha)) \lambda^k (u_2 \times \alpha) \\
&= P_k(\lambda^1(u_2) \times 1, \dots, \lambda^k(u_2)\times 1; 1 \times \lambda^1(\alpha), \dots, 1\times\lambda^k(\alpha)) \\
&= u_2 \times P^L_k(1, -1, \dots, (-1)^{k-1}; \lambda^1(\alpha), \dots, \lambda^k(\alpha)) \\
&= \Sigma^2 P^L_k(1, -1, \dots, (-1)^{k-1} ; \lambda^1\iota, \dots, \lambda^k\iota)(\alpha)
\end{align*}
where the penultimate equality follows since $\lambda^i(u_2) = (-1)^{i-1} u_2$, and $(u_2)^2 = 0$.
\end{proof}

We are now in a position to prove our main result.

\begin{thm}
\label{thm:main}
	We have an isomorphism of $\Z/2$-graded $\Z$-plethories with looping,
	$$ K^*(\rK_\bullet) \cong \frac{\Omega(\Set(\Z, \Z) \cotimes K(BU))}{\mathcal{I}}, $$
	where $\mathcal{I}$ is the plethystic ideal with looping generated by the relations
	\begin{align*}
		\Omega(f \otimes \lambda^p \iota) &= f(0) \Omega(1 \otimes \lambda^p\iota), \\
		\Omega^2(f \otimes \lambda^p \iota) &= f(0) \otimes P^L_p(1, -1, \dots, (-1)^{p-1}; \lambda^1 \iota, \dots,  \lambda^p \iota),
	\end{align*}
	for all $p\geq 1$.
\end{thm}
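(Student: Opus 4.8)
The plan is to build a comparison map out of the free object on the right using its universal property, check that it annihilates $\mathcal{I}$, and then prove the induced map is an isomorphism by a bidegree‑by‑bidegree comparison resting on Bott periodicity together with the computations already made.

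For the comparison map: $K^*(\rK_\bullet)$ is a $\Z/2$‑graded $\Z$‑plethory with looping by the graded, topologised form of \cref{thm:cohom-is-pleth} and its looping refinement (and restriction of scalars along $\Z\to K^*$). By \cref{thm:ungd-pleth-isom} the ungraded $\Z$‑plethory $\Set(\Z,\Z)\cotimes K(BU)$ is identified via $\theta$ with $K(\Z\times BU)=K^0(\rK_0)$, and together with the structure map $K^0(\rK_0)\to K^*(\rK_\bullet)$ this yields a map of $\Z/2$‑graded $\Z$‑plethories $\Set(\Z,\Z)\cotimes K(BU)\to U\bigl(K^*(\rK_\bullet)\bigr)$ into the underlying plethory. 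The adjunction of \cref{prop:looping-adjunction} then classifies a map of $\Z/2$‑graded $\Z$‑plethories with looping
$$\Phi\colon \Omega\bigl(\Set(\Z,\Z)\cotimes K(BU)\bigr)\longrightarrow K^*(\rK_\bullet),\qquad \Omega^\ell x\longmapsto \Omega^\ell(\theta(x)).$$
To see that $\Phi$ factors through the quotient, it suffices to check it kills the generators of $\mathcal{I}$, since $\mathcal{I}$ is the smallest plethystic‑with‑looping ideal containing them. The first relation is exactly \cref{thm:loop-lambda}, which gives $\Omega(\theta(f\otimes\lambda^p\iota))=f(0)\,l^p=f(0)\,\Omega(\theta(1\otimes\lambda^p\iota))$; applying $\Omega$ once more and using \cref{prop:loop-l} gives the second, $\Omega^2(\theta(f\otimes\lambda^p\iota))=f(0)\otimes P^L_p(1,-1,\dots,(-1)^{p-1};\lambda^1\iota,\dots,\lambda^p\iota)$. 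Thus $\Phi$ descends to $\overline\Phi\colon\Omega(\Set(\Z,\Z)\cotimes K(BU))/\mathcal{I}\to K^*(\rK_\bullet)$.

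The remaining task is that $\overline\Phi$ is an isomorphism, which I would do degreewise after first observing that the $\mathcal{I}$‑relations force a normal form on the source. Combining the two relations with the additivity and derivation identities defining $P_\Omega$ lets one rewrite every iterated looping $\Omega^\ell$ with $\ell\geq 2$ in terms of the subalgebra $\Set(\Z,\Z)\cotimes K(BU)$ together with the once‑looped classes $l^k=\Omega^1(1\otimes\lambda^k\iota)$; hence $\Omega(\Set(\Z,\Z)\cotimes K(BU))/\mathcal{I}$ is topologically generated as a $\Z$‑algebra in each bidegree by the image of $\Set(\Z,\Z)\cotimes K(BU)$ and the (odd, hence exterior) classes $l^k$. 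On the target, Bott periodicity reduces everything to $\rK_0=\Z\times BU$ and $\rK_1=U$: the $\bullet$‑degree‑$0$ part is $K^0(\Z\times BU)$ — using that $BU$ has a cell structure with cells only in even dimensions, so $K^1(BU)=0$ — matched with the source by \cref{thm:ungd-pleth-isom}, while the $\bullet$‑degree‑$1$ part is $K^*(U)\cong\Lambda_{K^*}[l^1,l^2,\dots]$ by \cref{prop:odd-gens}, matched with the exterior algebra on the $l^k$ over the even part. Surjectivity of $\overline\Phi$ is then immediate from the generation statement. For injectivity one shows the normal‑form monomials — products of $\lambda^k\iota$'s, idempotents $\chi_d$, and $l^k$'s — map to $\Z$‑linearly independent elements of $K^*(\rK_\bullet)$ via the explicit descriptions of $K(BU)$, $\Set(\Z,\Z)$ and $K^*(U)$, and that the remaining structure maps agree on these generators, which for the parts involving the $l^k$ is precisely \cref{thm:loop-lambda} and \cref{prop:loop-l} and for the even part is \cref{thm:ungd-pleth-isom}.

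The main obstacle is the upper‑bound half: verifying that the $\mathcal{I}$‑relations collapse $\Omega(\Set(\Z,\Z)\cotimes K(BU))/\mathcal{I}$ exactly down to the predicted size, with no residual identifications and no surplus generators — that is, that the asserted normal form is genuinely well defined on the quotient. This is a bookkeeping argument with the defining relations of $P_\Omega$ and $T_\codot(-)$, made delicate by how the completed graded tensor and composition products interact with the $\Z/2$‑grading (in particular how composition lowers loop degree and how the comultiplication formulas for $\Omega^k$ behave with respect to the bigrading). I would address it by establishing the normal form first on the free object $T_\codot(P_\Omega)$ and then checking that passage to the quotient by $\mathcal{I}$ is faithfully recorded by it.
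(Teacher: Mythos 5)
Your proposal is correct and follows essentially the same route as the paper: construct the map out of the free plethory with looping via the adjunction of \cref{prop:looping-adjunction} applied to the isomorphism $\theta$ of \cref{thm:ungd-pleth-isom}, use \cref{thm:loop-lambda} and \cref{prop:loop-l} to see that the relations generating $\mathcal{I}$ are satisfied and that the kernel is exactly $\mathcal{I}$, and deduce surjectivity from \cref{prop:odd-gens} and \cref{thm:loop-lambda}. The paper's proof is terser — it simply asserts that the kernel is precisely $\mathcal{I}$ — whereas you correctly flag the normal-form/size comparison as the point where the real verification lives; that extra care is to your credit but does not change the argument.
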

\begin{proof} From \cref{thm:ungd-pleth-isom} we have seen that we have an isomorphism of complete Hausdorff
$\Z$-plethories
\[\theta \colon \Set(\Z, \Z) \cotimes K(BU) \xrightarrow{\isom} K^0(\rK_0) \subseteq K^*(\rK_\bullet).
\]
By \cref{prop:looping-adjunction} this extends to a map of $\Z$-plethories with looping
\[\Omega(\Set(\Z, \Z) \cotimes K(BU)) \to K^*(\rK_\bullet),
\] which is surjective by \cref{prop:odd-gens} and \cref{thm:loop-lambda}. By \cref{thm:loop-lambda} and \cref{prop:loop-l} the kernel of this map is precisely $\mathcal{I}$. \end{proof}

\newcommand{\noopsort}[1]{} \newcommand{\singleletter}[1]{#1}

\end{document}